\newtheorem{theorem}{Theorem}
\newtheorem{corollary}{Corollary}
\newtheorem{lemma}{Lemma}
\theoremstyle{remark}
\newtheorem{remark}{Remark}
\theoremstyle{definition}
\newtheorem{define}{Definition}
\newtheorem{example}{Example}
\begin{document}

\title[On the automorphism group of a certain infinite type domain]{On the automorphism group of a certain infinite type domain in $\mathbb C^2$}

\author{Ninh Van Thu}

\thanks{ The research of the author was supported in part by a  grant  of Vietnam National University at Hanoi, Vietnam.}

\address{ Department of Mathematics, Vietnam National University at Hanoi, 334 Nguyen Trai, Thanh Xuan, Hanoi, Vietnam}
\email{thunv@vnu.edu.vn}

\subjclass[2010]{Primary 32M05; Secondary 32H02, 32H50, 32T25.}
\keywords{Holomorphic vector field, real hypersurface, infinite type point.}
\begin{abstract}
In this article, we consider an infinite type domain $\Omega_P$ in $\mathbb C^2$. The purpose of this paper is to investigate the holomorphic vector fields tangent to an infinite type model in $\mathbb C^2$ vanishing at an infinite type point and to give an explicit description of the automorphism group of $\Omega_P$.
\end{abstract}
\maketitle

\section{Introduction}

Let $D$ be a domain in $\mathbb C^n$. An automorphism of $D$ is a biholomorphic self-map. The set of all automorphisms of $D$ makes a group under composition. We denote the automorphism group by $\mathrm{Aut}(D)$. The topology on $\mathrm{Aut}(D)$ is that of uniform convergence on compact sets (i.e., the compact-open topology).

It is a standard and classical result of H. Cartan that if $D$ is a bounded domain in $\mathbb C^n$ and the automorphism group of $D$ is noncompact then there exist a point $x \in D$, a point $p \in \partial D$, and automorphisms $\varphi_j \in \mathrm{Aut}(D)$ such that $\varphi_j(x) \to p$. In this circumstance we call $p$ a {\it boundary orbit accumulation point}. 

In $1993$, Greene and Krantz \cite{GK} posed a conjecture that for a smoothly
bounded pseudoconvex domain admitting a non-compact automorphism group, the point orbits can
accumulate only at a point of finite type in the sense of Kohn, Catlin, and D'Angelo (see \cite{D, Kr01} for this concept). For this conjecture, we refer the reader to \cite{IK}.

One of the evidence for the correctness of Greene-Krantz's conjecture is provided in \cite{Ka94}. H. Kang \cite{Ka94} proved that the automorphism group $\mathrm{Aut}(E_P)$ is compact, where $E_P$ is a special kind of  Hartogs  domains
$$
E_P=\{(z_1,z_2)\in \mathbb C^2\colon  |z_1|^2+P(z_2)<1\}\Subset \mathbb C^2,
$$
where $P$ is a real-valued, $\mathcal{C}^\infty$-smooth, subharmonic function satisfying:
\begin{itemize}
\item[(i)] $P(z_2)>0$ if $z_2\ne 0$,
\item[(ii)] $P$ vanishes to infinite order only at the origin.             
\end{itemize}                                                   
Note that $E_P$ is of infinite type along the points $(e^{i\theta},0)\in bE_P$ and $(e^{i\theta},0)$ are the only points of infinite type. 

Recently, S. Krantz \cite{Kr12} showed that the domain
$$
\Omega:=\{z\in \mathbb C^n\colon |z_1|^{2m_1}+ |z_2|^{2m_2}+\cdots+ |z_{n-1}|^{2m_{n-1}}+\psi(|z_n|)<1\},
$$
where the $m_j$ are positive integers and where $\psi$ is a real-valued, even, smooth, monotone-and-convex-on-$[0,+\infty)$ function of a real variable with $\psi(0)=0$ that vanishes to infinite order at $0$, has compact automorphism group. In fact, the only automorphisms of $\Omega$ are the rotations in each variable separately (cf. \cite{GK, IK06}).

We would like to emphasize here that the automorphism group of a domain in $\mathbb C^n$ is not easy to describe explicitly; besides, it is unknown in most cases. In this paper, we are going to compute the automorphism group of  an infinite type model  
$$
\Omega_P:=\{(z_1,z_2)\in \mathbb C^2\colon \rho(z_1,z_2)= \mathrm{Re}~z_1+P(z_2)<0\},
$$
where  $P:\mathbb C\to \mathbb R$ is a $\mathcal{C}^\infty$-smooth function satisfying: 
\begin{itemize}
\item[(i)] $P(z)=q(|z|)$ for all $z\in \mathbb C$, where $q:[0,+\infty)\to \mathbb R$ is a function with $q(0)=0$ such that it is strictly increasing and convex on $[0,\epsilon_0)$ for some $\epsilon_0>0$, and
\item[(ii)] $P$ vanishes to infinite order at $0$.
\end{itemize}
It is easy to see that $(it,0), t\in \mathbb R$, are points of infinite type in $b \Omega_P$, and hence $\Omega_P$ is of infinite type.

In order to state the first main result, we recall the following terminology. A {\it holomorphic vector field} in $\mathbb C^n$ takes the form
$$
H = \sum_{k=1}^n h_k (z) \frac{\partial}{\partial z_k}
$$
for some functions $h_1, \ldots, h_n$ holomorphic in $z=(z_1, \ldots, z_n)$.  A smooth real
hypersurface germ $M$ (of real codimension 1) at $p$ in $\mathbb C^n$ takes a defining
function, say $\rho$, such that $M$ is represented by the equation $\rho(z)=0$.  The holomorphic vector field $H$ is said to be {\it tangent} to $M$ if its real part $\hbox{Re }H$ is tangent to $M$, i.e., $H$ satisfies the equation 
\begin{equation}\label{maineq} 
(\hbox{Re }H)\rho(z) = 0~\text{for all}~z\in M.
\end{equation}

The first aim of this paper is to prove the following theorem, which is a characterization of tangential holomorphic vector fields.
\begin{theorem}\label{Th1} Let $P:\mathbb C\to \mathbb R$ be a $\mathcal{C}^\infty$-smooth function satisfying 
\begin{itemize}
\item[(i)] $P(z)=q(|z|)$ for all $z\in \mathbb C$, where $q:[0,+\infty)\to \mathbb R$ is a function with $q(0)=0$ such that it is strictly increasing and convex on $[0,\epsilon_0)$ for some $\epsilon_0>0$, and
\item[(ii)] $P$ vanishes to infinite order at $0$.
\end{itemize}
If $H=h_1(z_1,z_2)\frac{\partial}{\partial z_1}+h_2(z_1,z_2)\frac{\partial}{\partial z_2}$ with $H(0,0)=0$ is holomorphic in $\Omega_P\cap U$, $\mathcal{C}^\infty$-smooth in $\overline{\Omega_P}\cap U$, and tangent to $b \Omega_P\cap U$, where $U$ is a neighborhood of $(0,0)\in \mathbb C^2$, then $H=i\beta z_2\frac{\partial}{\partial z_2}$ for some $\beta\in \mathbb R$. 
\end{theorem}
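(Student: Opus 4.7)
The plan is to rewrite the tangency condition on the natural parameterization $(t,z_2)\mapsto(-P(z_2)+it,z_2)$ of $b\Omega_P\cap U$ as
\[
\mathrm{Re}\!\left[\tfrac{1}{2}h_1(-P(z_2)+it,z_2)+h_2(-P(z_2)+it,z_2)\,P_{z_2}(z_2,\bar z_2)\right]=0,
\]
to expand $h_j(z_1,z_2)=\sum_{n\ge 0}h_{j,n}(z_1)z_2^n$ (each $h_{j,n}$ holomorphic in $\{\mathrm{Re}\,z_1<0\}$ near $0$ and smooth up to its boundary), and to write $z_2=re^{i\theta}$ together with $P_{z_2}=\tfrac{q'(r)}{2}e^{-i\theta}$. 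This turns the identity into a Fourier series in $\theta$ whose every coefficient must vanish for all admissible $(r,t)$. I then exhaust these equations one Fourier mode at a time, exploiting the extreme flatness of $q$ at $0$.

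For $|k|\ge 2$ the $e^{ik\theta}$-coefficient reads $h_{1,k}(-q(r)+it)r^k+h_{2,k+1}(-q(r)+it)r^{k+1}q'(r)=0$; dividing by $r^k$ and letting $r\to 0^+$ gives $h_{1,k}(it)\equiv 0$, hence $h_{1,k}\equiv 0$ by Schwarz reflection, and then $h_{2,k+1}\equiv 0$ as well. Coupling the $k=\pm 1$ equations and dividing by $q'(r)$ produces
\[
h_{1,1}(-q(r)+it)\,\tfrac{r}{q'(r)}+h_{2,2}(-q(r)+it)\,r^2+\overline{h_{2,0}(-q(r)+it)}=0.
\]
Because $q'$ inherits infinite-order vanishing from $q$, $r/q'(r)\to\infty$ as $r\to 0^+$, and boundedness of the left-hand side successively forces $h_{1,1}\equiv 0$, $h_{2,0}\equiv 0$, and $h_{2,2}\equiv 0$. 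The problem thus reduces to $h_1(z_1,z_2)=\phi(z_1)$, $h_2(z_1,z_2)=\psi(z_1)\,z_2$ together with the $k=0$ relation
\[
\mathrm{Re}\!\left[\phi(-q(r)+it)+\psi(-q(r)+it)\,rq'(r)\right]=0,
\]
which at $r=0$ gives $\mathrm{Re}\,\phi(it)=0$ for small real $t$.

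\emph{The main obstacle} is to decouple $\phi$ and $\psi$ in this last identity. Taylor-expanding each factor in its first argument and using $\mathrm{Re}\,\phi(it)=0$ rewrites it as
\[
-q(r)\,\mathrm{Re}\,\phi'(it)+rq'(r)\,\mathrm{Re}\,\psi(it)+O(q(r)^2)+O(q(r)\,rq'(r))=0.
\]
The key ingredient is that the hypothesis on $q$ forces $\limsup_{r\to 0^+}rq'(r)/q(r)=+\infty$: otherwise, $q'(r)/q(r)\le M/r$ on some $(0,\delta)$ would yield, by integration, $q(r)\ge Cr^M$ there, contradicting infinite-order vanishing. Dividing the identity by $q(r)$ along a sequence $r_n\to 0^+$ with $r_nq'(r_n)/q(r_n)\to\infty$ forces $\mathrm{Re}\,\psi(it)=0$; substituting this back and letting $r\to 0^+$ then yields $\mathrm{Re}\,\phi'(it)=0$.

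Finally, since $t\mapsto\phi(it)$ is purely imaginary, so is its $t$-derivative $i\phi'(it)$, giving $\mathrm{Im}\,\phi'(it)=0$; combined with $\mathrm{Re}\,\phi'(it)=0$ this yields $\phi'(it)\equiv 0$, and then $\phi\equiv 0$ by Schwarz reflection and $\phi(0)=0$. The tangency identity collapses to $rq'(r)\,\mathrm{Re}\,\psi(-q(r)+it)=0$, so $\mathrm{Re}\,\psi$ vanishes on an open subset of the left half-plane; the Cauchy--Riemann equations then force $\psi$ to be a purely imaginary constant $i\beta$ for some $\beta\in\mathbb R$, and therefore $H=i\beta z_2\,\partial/\partial z_2$, as claimed.
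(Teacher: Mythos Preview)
Your argument is correct and follows a genuinely different, more self-contained route than the paper. The paper does not separate the tangency identity into Fourier modes in $\theta$; instead it first shows, via Schwarz reflection applied to $\partial_{z_1}^m\partial_{z_2}^n h_1(\cdot,0)$, that $h_1$ cannot vanish to infinite order unless it is identically zero, and then derives a contradiction from a finite-order expansion along the curve $t=\alpha P(z_2)$ by invoking an external lemma from \cite{Kim-Ninh}. Once $h_1\equiv 0$, the paper extends $h_2$ holomorphically across the boundary (Schwarz reflection in $z_1$ for each fixed $z_2\ne 0$, followed by Chirka's curvilinear Hartogs lemma) and then cites \cite[Theorem~3]{HN} to conclude $h_2=i\beta z_2$.

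Your Fourier decomposition bypasses all of this: the modes $|k|\ge 2$ and $k=\pm 1$ kill all coefficients except $h_{1,0}$ and $h_{2,1}$ directly, and your treatment of the $k=0$ mode via the elementary observation $\limsup_{r\to 0^+} rq'(r)/q(r)=+\infty$ (equivalent to the paper's Remark~\ref{remark1}) replaces both the order-of-vanishing contradiction and the Chirka/\cite{HN} machinery. The trade-off is that your proof uses more of the rotational symmetry $P(z_2)=q(|z_2|)$ explicitly (it is what makes the Fourier modes decouple so cleanly), whereas the paper's argument is phrased so that the rotational structure enters only at isolated points through the cited lemmas; but under the stated hypotheses your approach is shorter and avoids the three external references entirely. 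One small imprecision: the displayed $e^{ik\theta}$-coefficient you wrote is literally correct only for $k\ge 2$ (for $k\le -2$ one gets its complex conjugate), but this does not affect the conclusion.
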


 In the case that the tangential holomorphic vector field $H$ is holomophic in a neighborhood of the origin, Theorem \ref{Th1} is already proved in \cite{By1, HN}. Here, since the tangential holomorphic vector field $H$ in Theorem \ref{Th1} is only holomorphic inside the domain, it seems to us that some key techniques in \cite{By1} could not use for our situation.  To get around this difficulty, we first employ the Schwarz reflection principle to show that the holomorphic functions $h_1, h_2$ must vanish to finite order at the origin. Then the equation (\ref{maineq}) implies that $h_1\equiv 0$. Therefore, from Chirka's curvilinear Hargtogs' lemma the proof finally follows (see the detailed proof in Section \ref{S2}).

We now note that $\mathrm{Aut}(\Omega_P)$ is noncompact since it contains biholomorphisms 
$$
(z_1,z_2)\mapsto (z_1+is, e^{it}z_2 ),~s,t\in \mathbb R.
$$
Let us denote by $\{R_t\}_{t\in \mathbb R}$ the one-parameter subgroup of $\mathrm{Aut}(\Omega_P,0)$ generated by the holomorphic vector field $H_R(z_1,z_2)=iz_2\frac{\partial }{\partial z_2}$, that is, 
$$
R_t(z_1,z_2)=\big(z_1,e^{it}z_2\big), ~\forall t\in\mathbb R.
$$
In addition, denote by $T_s(z_1,z_2)=(z_1+is,z_2)$ for $s\in \mathbb R$.

To state the second main result, we need the following definitions.
Recall that the Kobayashi metric $K_D$ of $D$ is defined by
$$K_D(\eta,X):=\inf\{\frac{1}{R}|\ \exists f: \Delta\to D~\text{such that} f(0)=\eta, f'(0)=R X\},$$
where $\eta\in D$ and $X\in T^{1,0}_\eta\mathbb C^n$, where $\Delta_r$ is a disc with center at the origin and radius $r>0$ and $\Delta:=\Delta_1$. 

The following definition derives from work of X. Huang (\cite{Hu95}).
\begin{define} Let $D$ be a domain in $\mathbb C^n$ with $\mathcal{C}^2$-smooth boundary $bD$ and $z_0$ be a boundary point.  For a $\mathcal{C}^1$-smooth monotonic increasing function $g: [1,+\infty)\to [1,+\infty)$, we say that $D$ is \emph{$g$-admissible} at $z_0$ if there exists a neighborhood $V$ of $z_0$ such that
$$
K_{D}(z,X)\gtrsim g(\delta_D^{-1}(z))|X|
$$
for any $z\in V\cap D$ and $X\in T^{1,0}_z\mathbb C^n$, where $\delta_D(z)$ is the distance of $z$ to $bD$.
\end{define}
\begin{remark}
\begin{itemize}
\item[(i)] It is proved in \cite[p.93]{Ber} (see also in \cite{Sib81}) that if there exists a plurisubharmonic peak function at $z_0$, then 
there exists a neighborhood $V$ of $z_0$ such that 
$$
K_D(z,X)\leq K_{D\cap V}(z,X)\leq  2K_D(z, X),
$$ 
for any $z\in V\cap D$ and $X\in T^{1,0}_z\mathbb C^n$.
\item[(ii)] If $D$ is $\mathcal{C}^\infty$-smooth pseudoconvex of finite type, then $D$ is $t^\epsilon$-admissible at any boundary point for some $\epsilon>0$ (cf. \cite{Cho92}). Recently, T. V. Khanh \cite{Khanh13} proved that a certain pseudoconvex domain of infinite type is also $g$-admissible for some function $g$.
\end{itemize}
\end{remark}

\begin{define}[see \cite{Khanh13}] Let $D\subset \mathbb C^n$ be a $\mathcal{C}^2$-smooth domain. Assume that $D$ is pseudoconvex near $z_0\in bD$. For a $\mathcal{C}^1$-smooth monotonic increasing function $u: [1,+\infty)\to [1,+\infty)$ with $u(t)/t^{1/2}$ decreasing, we say that a domain $D$ has the $u$-property at the boundary $z_0$ if there  exist a neighborhood $U$ of $z_0$ and a family of $\mathcal{C}^2$-functions $\{\phi_\eta\}$ such that
\begin{itemize}
\item[(i)] $|\phi_\eta|<1$, $\mathcal{C}^2$, and plurisubharmonic on $D$;
\item[(ii)] $i\partial\bar \partial \phi_\eta \gtrsim u(\eta^{-1})^2 Id$ and $|D\phi_\eta|\lesssim \eta^{-1}$ on $U\cap \{z\in D\colon -\eta<r(z)<0\}$, where $r$ is a $\mathcal{C}^2$-defining function for $D$.
\end{itemize}
Here and  in what follows, $\lesssim$ and $\gtrsim$ denote inequalities up to a positive constant multiple. In addition, we use $\approx $ for the combination of $\lesssim$ and $\gtrsim$.

\end{define}
\begin{define} [see \cite{Khanh13}]We say that a domain $D$ has the strong $u$-property at the boundary $z_0$ if it has the $u$-property with $u$ satisfying the following:
\begin{itemize}
\item[(i)] $\int\limits_t^{+\infty}\frac{da}{au(a)}$ for some $t>1$ and denote by $(g(t))^{-1}$ this finite integral;
\item[(ii)] The function $\frac{1}{\delta g\Big(1/\delta^\eta\Big)}$ is decreasing and $\int\limits_0^d\frac{1}{\delta g\Big(1/\delta^\eta)\Big)}d\delta<+\infty$ for $d>0$ small enough and for some $0<\eta<1$.
\end{itemize}
\end{define}
\begin{define} We say that $\Omega_P$ satisfies the condition (T) at $\infty$ if one of following conditions holds
\begin{itemize}
\item[(i)]$\lim_{z\to \infty}P(z)=+\infty$; 
\item[(ii)] The function $Q$ defined by setting $Q(\zeta):=P(1/\zeta)$ can be extended to be $\mathcal{C}^\infty$-smooth in a neighborhood of $\zeta=0$, $\Omega_Q$ has the strong $\tilde u$-property at $(-r,0)$ for some function $\tilde u$, where $r=\lim_{z\to \infty} P(z)$, and $b\Omega_P$ and $b\Omega_Q$ are not isomorphic as CR maniflod germs at $(0,0)$ and $(-r,0)$ respectively.
\end{itemize}

\end{define}

The second aim of this paper is to show the following theorem.
\begin{theorem} \label{Th2}
Let $P:\mathbb C\to \mathbb R$ be a $\mathcal{C}^\infty$-smooth function satisfying 
\begin{itemize}
\item[(i)] $P(z)=q(|z|)$ for all $z\in \mathbb C$, where $q:[0,+\infty)\to \mathbb R$ is a function with $q(0)=0$ such that it is strictly increasing and convex on $[0,\epsilon_0)$ for some $\epsilon_0>0$,
\item[(ii)] $P$ vanishes to infinite order at $0$, and
\item[(iii)] $P$ vanishes to finite order at any $z\in \mathbb C^*:=\mathbb C\setminus\{0\}$.
\end{itemize}
Assume that $\Omega_P$ has the strong $u$-property at $(0,0)$ and $\Omega_P$ satisfies the property (T) at $\infty$. Then 
$$
\mathrm{Aut}(\Omega_P)=\{(z_1,z_2)\mapsto (z_1+is, e^{it}z_2 )\colon s,t \in \mathbb R\}.
$$
\end{theorem}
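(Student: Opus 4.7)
The goal is to identify the Lie algebra $\mathfrak{g}$ of $\mathrm{Aut}(\Omega_P)$ with $\mathbb{R} H_T \oplus \mathbb{R} H_R$, where $H_T = i\,\partial/\partial z_1$ and $H_R = i z_2\,\partial/\partial z_2$ generate the closed subgroup $G_0 := \{T_s \circ R_t : s, t \in \mathbb{R}\} \subset \mathrm{Aut}(\Omega_P)$, and then to conclude $\mathrm{Aut}(\Omega_P) = G_0$ via an orbit-and-isotropy argument.

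First I would invoke Bochner--Montgomery to realise $\mathrm{Aut}(\Omega_P)$ as a real Lie group whose Lie algebra $\mathfrak{g}$ consists of complete holomorphic vector fields on $\Omega_P$. The strong $u$-property at $(0,0)$ furnishes subelliptic-type estimates in Khanh's sense, and combined with the standard Bell--Ligocka regularity argument this ensures that every one-parameter subgroup of $\mathrm{Aut}(\Omega_P)$---and hence each generator $H \in \mathfrak{g}$---extends $\mathcal{C}^\infty$-smoothly to $\overline{\Omega_P}$ in a neighbourhood of $(0,0)$. Hypothesis (iii) makes $b\Omega_P$ of finite type away from $E := \{(is, 0) : s \in \mathbb{R}\}$, where classical subelliptic estimates handle regularity. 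Thus each $H \in \mathfrak{g}$ is a tangential holomorphic vector field, holomorphic on $\Omega_P$ and $\mathcal{C}^\infty$-smooth on $\overline{\Omega_P}$ near $(0,0)$.

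Next, for $H \in \mathfrak{g}$ with flow $\varphi^t$, I would examine the orbit of $(0,0)$. Since CR diffeomorphisms preserve the set of infinite type points and (ii)--(iii) identify that set with $E$, one has $\varphi^t(0,0) \in E \cup \{\infty\}$. Condition (T) at $\infty$ eliminates the $\{\infty\}$ possibility: in case (T)(i) the boundary escapes to infinity, so no orbit can accumulate there, while in case (T)(ii) the germ of $b\Omega_P$ at $(0,0)$ is not CR-isomorphic to the germ of $b\Omega_Q$ at $(-r, 0)$, precisely forbidding an automorphism from sending a neighbourhood of $(0,0)$ to a neighbourhood of the point at infinity. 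Hence $\varphi^t(0,0) \in E$, which forces $H(0,0) = i \alpha\,\partial/\partial z_1$ for some $\alpha \in \mathbb{R}$. Then $H - \alpha H_T$ satisfies the hypotheses of Theorem~\ref{Th1}, giving $H - \alpha H_T = \beta H_R$. Therefore $\mathfrak{g} = \mathbb{R} H_T \oplus \mathbb{R} H_R$, and the identity component $\mathrm{Aut}(\Omega_P)^\circ = G_0$.

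Finally, for an arbitrary $\varphi \in \mathrm{Aut}(\Omega_P)$, the boundary extension sends $E$ to $E$, so $\varphi(0,0) = (is, 0)$ for some $s \in \mathbb{R}$; composing with $T_{-s}$ reduces to $\varphi(0,0) = (0,0)$. The isotropy subgroup at $(0,0)$ is then a closed Lie subgroup of $\mathrm{Aut}(\Omega_P)$ whose Lie algebra equals $\mathbb{R} H_R$ by Theorem~\ref{Th1}; a rigidity argument using the radial symmetry of $q$ and the preservation of $\rho = \mathrm{Re}\,z_1 + q(|z_2|)$ modulo positive factors---together with the uniqueness in Theorem~\ref{Th1} applied to the generator of any curve in the isotropy joining $\varphi$ to the identity---identifies this subgroup with $\{R_t\}_{t \in \mathbb{R}}$, whence $\varphi = T_s \circ R_t$. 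The main technical obstacle is the $\mathcal{C}^\infty$-extension of automorphisms up to $b\Omega_P$ at the infinite type point $(0,0)$, for which the strong $u$-property is indispensable; condition (T) at infinity plays the complementary role of pinning down the orbit accumulation set globally.
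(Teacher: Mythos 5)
There is a genuine gap, and it sits exactly where your proposal becomes vague: passing from the Lie algebra to the full group. Your argument (Bochner--Montgomery, classification of complete generators via Theorem~\ref{Th1}) can at best identify the identity component $\mathrm{Aut}(\Omega_P)^{\circ}$ with $\{T_s\circ R_t\}$. An arbitrary automorphism $\varphi$ need not lie on any one-parameter subgroup, and your final step explicitly assumes ``a curve in the isotropy joining $\varphi$ to the identity'' --- which is precisely what has to be proved. The closing ``rigidity argument using the radial symmetry of $q$'' is not an argument: for the radially symmetric finite-type models $\mathrm{Re}\,z_1+|z_2|^{2m}<0$ the same reasoning would have to fail, since they admit the dilations $(z_1,z_2)\mapsto(\lambda z_1,\lambda^{1/2m}z_2)$; ruling such maps out for $\Omega_P$ genuinely uses the infinite-order vanishing of $P$. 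The paper handles an arbitrary $f\in\mathrm{Aut}(\Omega_P)$ directly: after the boundary extension and the normalization $f(0,0)=(0,0)$, it conjugates the known rotations, $F_t:=f\circ R_{-t}\circ f^{-1}$, applies Theorem~\ref{Th1} to the generator of this one-parameter group (not to $f$), obtaining $F_t=R_{\delta t}$; the relation $f=R_{\delta t}\circ f\circ R_t$ forces $\delta=-1$, $f_2=z_2b_1(z_1)$ and $f_1=a_0(z_1)$; and then the fine asymptotics of $P$ (the lemmas $\lim P(\alpha z)/P(z)=\beta\Rightarrow\alpha=\beta=1$ and the expansion of $P(z+z\beta(z))-P(z)$ in terms of $P(z)\,|z|p'(|z|)$, via Lemma~\ref{lemma7} and Lemma~\ref{lemma6} of Section~4) force $a_{01}=b_{10}=1$ and finally $f=\mathrm{id}$ up to $T_s\circ R_t$. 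None of this Taylor-coefficient/asymptotic analysis appears in your proposal, and without it the discrete part of the group is not controlled.

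Two further points need real proofs rather than citations. First, treating $\mathrm{Aut}(\Omega_P)$ as a Lie group of complete vector fields requires properness/hyperbolicity, which is not obvious for the unbounded domain $\Omega_P$ and is never used in the paper --- the paper's only one-parameter groups are the explicit $R_t$ and their conjugates by a single $f$. Second, the smooth extension of an automorphism to $b\Omega_P$ near an infinite-type point is not an off-the-shelf Bell--Ligocka statement: one must first show that the cluster set of some point $(it_1,0)$ contains a finite boundary point of infinite type rather than the point at infinity (this is where condition (T), the auxiliary domain $\Omega_Q$, the pseudoconvexity-invariance lemma, and the H\"older/condition-R machinery of Section~3, culminating in Corollary~\ref{Co2} and Lemma~\ref{lem6}, are consumed). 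You correctly name the ingredients, but in your write-up this entire section is compressed into one sentence, while the component-counting gap above is the step that actually breaks the proposed proof.
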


\begin{remark}\label{remark}
Let $\Omega_P$ be as in Theorem \ref{Th2} and let $P_\infty(b \Omega_P)$ the set of all points in $b \Omega_P$ of D'Angelo infinite type. It is easy to see that $P_\infty(b \Omega_P)=\{(it,0)\colon t\in \mathbb R\}$. Moreover, since $\Omega_P$ is invariant under any translation $(z_1,z_2)\mapsto (z_1+it,z_2),~t\in \mathbb R$, it satisfies the trong $u$-property at $(it,0)$ for any $t\in \mathbb R$.
\end{remark}
\begin{remark}\label{remark4}
 Let $P$ be a function defined by $P(z_2)=\exp(-1/|z_2|^\alpha)$ if $z_2\ne 0$ and $P(0)=0$, where $0<\alpha<1$. Then by \cite[Corollary 1.3]{Khanh13}, $\Omega_P$ has $\log^{1/\alpha}$-property at $(it,0)$ and thus it is $\log^{1/\alpha-1}$-admissible at $(it,0)$ for any $t\in \mathbb R$. Furthermore, a computation shows that  if $0<\alpha<1/2$, then $\Omega_P$ has the strong $\log^{1/\alpha}$-property at $(it,0)$ for any $t\in \mathbb R$. 
\end{remark}
\begin{example}\label{ex1}
Let $E_j$, $j=1,\ldots,3$, be domains defined by
$$
E_j:=\{(z_1,z_2)\in \mathbb C^2\colon \rho(z_1,z_2)= \mathrm{Re}~z_1+P_j(z_2)<0\},
$$
where $P_j$ are defined by
\begin{equation*}
\begin{split}
P_1&=\psi(|z_2|)e^{-1/|z_2|^{\alpha}}+(1-\psi(|z_2|))\frac{1}{|z_2|^{2m}},\\
P_2&=\psi(|z_2|)e^{-1/|z_2|^{\alpha}}+(1-\psi(|z_2|))e^{-1/|z_2|^{\beta}},\\
P_2&=\psi(|z_2|)e^{-1/|z_2|^{\alpha}}+(1-\psi(|z_2|))|z_2|^2
\end{split}
\end{equation*}
if $z_2\ne 0$ and $P(0)=0$, where $0<\alpha,\beta<1/2, m\in \mathbb N^*)$ with $\beta\ne \alpha$ and $\psi(t)$ is a $\mathcal{C}^\infty$-smooth cut-off function such that $\psi(t)=1$ if $|t|<a$ and $\psi(t)=0$ if $|t|>b~(0<a<b)$. It follows from Remark \ref{remark4} and a computation that $E_j,~j=1,\ldots,3,$ have the strong $\log^{1/\alpha}$-property and satisfy the property (T) at $\infty$. Therefore, by Theorem \ref{Th2} we conclude that
$$
\mathrm{Aut}(E_j)=\{(z_1,z_2)\mapsto (z_1+is, e^{it}z_2 )\colon s,t \in \mathbb R\},~j=1,\ldots,3.
$$
\end{example}

We explain now the idea of proof of Theorem \ref{Th2}. Let $f\in \mathrm{Aut}(\Omega_P)$ be an arbitrary. We show that there exist $t_1,t_2\in \mathbb R$ such that $f,f^{-1}$ extend smoothly to $b\Omega_P$ near $(it_1,0)$ and $(it_2,0)$ respectivey and $(it_2,0)=f(it_1,0)$ (cf. Lemma \ref{lem6}). Replacing $f$ by $T_{-t_2}\circ f\circ T_{t_1}$, we may assume that $f,f^{-1}$ extend smoothly to $b\Omega_P$ near the origin and $f(0,0)=(0,0)$. Next, we consider the one-parameter subgroup $\{F_t\}_{t\in \mathbb R}$ of $\mathrm{Aut}(\Omega_P)\cap \mathcal{C}^\infty(\overline{\Omega_P}\cap U)$ defined by $F_t=f\circ R_{-t}\circ f^{-1}$. By employing Theorem \ref{Th1}, there exists a real number $\delta$ such that $F_t=R_{\delta t}$ for all $t\in\mathbb R$. Using the property that $P$ vanishes to infinite order at $0$, it is proved that $f=R_{t_0}$ for some $t_0\in \mathbb R$ (see the detailed proof in Section $4$). This finishes our proof.

This paper is organized as follows. In Section $2$,  we prove Theorem \ref{Th1}. In Section $3$,  we prove several lemmas to be used mainly in the proof of Theorem \ref{Th2}. Section $4$ is devoted to the proof of Theorem \ref{Th2}. Finally, two lemma are given in Appendix.


\section{Holomorphic vector fields tangent to an infinite type model}\label{S2}

This section is devoted to the proof of Theorem \ref{Th1}. Assume that $P:\mathbb C\to \mathbb R$ is a $\mathcal{C}^\infty$-smooth function satisfying (i) and (ii) as in Introduction. 

Then we consider a nontrivial holomorphic vector field $H=h_1(z_1,z_2)\frac{\partial}{\partial z_1}+h_2(z_1,z_2)\frac{\partial}{\partial z_2}$ defined on $\Omega_P\cap U$, where $U$ is a neighborhood of the origin. We only consider $H$ is tangent to $b \Omega_P\cap U$. This means that they satisfy the identity
\begin{equation}\label{eq1}
(\mathrm{Re}~H)\rho(z_1,z_2)=0,~\forall~ (z_1,z_2)\in b \Omega_P\cap U. 
\end{equation}
By a simple computation, we have 
\begin{equation*}
\begin{split}
 \rho_{z_1}(z_1,z_2)&= 1,\\
\rho_{z_2}(z_1,z_2)&= P'(z_2),
\end{split}
\end{equation*}
and the equation (\ref{eq1}) can thus be rewritten as
\begin{equation}\label{eq2}
\begin{split}
&\mathrm{Re} \Big[h_1(z_1,z_2)+P'(z_2)h_2(z_1,z_2)\Big ]=0
\end{split}
\end{equation}
for all $(z_1,z_2)\in b \Omega_P\cap U$.

Since $\Big(it-P(z_2), z_2\Big)\in b \Omega_P$ for any $t \in \mathbb R$ with $t$ small enough, the above equation again admits a new form
\begin{equation}\label{eq3}
\begin{split}
&\mathrm{Re}\Big[ h_1\big(it-P(z_2),z_2\big)+P'(z_2)h_2\big(it-P(z_2),z_2\big)\Big]=0
\end{split}
\end{equation}
for all $z_2\in\mathbb C$ and for all $t\in\mathbb R$ with $|z_2|<\epsilon_0$ and $|t|<\delta_0$, where $\epsilon_0>0$ and $\delta_0>0$ are small enough.

\begin{lemma}\label{lem6} We have that
 $\frac{\partial^{m+n}}{\partial z_1^m\partial z_2^n} h_1(z_1,0)$ can be extended to be holomorphic in a neighborhood of $z_1=0$ for every $m,n\in\mathbb N$. 
\end{lemma}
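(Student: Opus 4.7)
The plan is to reduce the statement to a Schwarz-reflection argument in $z_1$ applied separately to each slice $F_n(z_1):=\partial_{z_2}^n h_1(z_1,0)$. By the hypotheses on $H$, each $F_n$ is holomorphic on the left half-disk $\{\mathrm{Re}\,z_1<0,\,|z_1|<\delta\}$ and $\mathcal C^\infty$ up to the imaginary segment $\{it:|t|<\delta_0\}$; once $F_n$ is extended holomorphically across $z_1=0$, differentiating the extension in $z_1$ delivers the full statement for every $m\in\mathbb N$.

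To identify the boundary values of $F_n$ from (\ref{eq3}), rewrite that identity as $\mathrm{Re}\,\Phi(t,z_2,\bar z_2)=0$ with
$$
\Phi(t,z_2,\bar z_2):=h_1(it-P(z_2),z_2)+P'(z_2)\,h_2(it-P(z_2),z_2),
$$
and compute the Taylor expansion of $\Phi$ in $(z_2,\bar z_2)$ at $z_2=0$. Since $h_1,h_2$ are holomorphic in $z_2$, the $\bar z_2$-dependence of $\Phi$ enters only through the argument $it-P(z_2)$ and through the factor $P'(z_2)=\partial_{z_2}P$; a chain-rule / Fa\`a di Bruno computation together with the infinite-order vanishing of $P$ at $0$ then shows that for all $j\ge 0$ and $k\ge 1$,
$$
\partial_{z_2}^j\partial_{\bar z_2}^k\Phi\big|_{z_2=0}=0,
$$
while $\partial_{z_2}^j\Phi|_{z_2=0}=\partial_{z_2}^j h_1(it,0)=j!\,a_j(it)$, writing $a_j(it):=F_j(it)/j!$. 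Hence the Taylor series of $\Phi$ at $z_2=0$ is the purely $z_2$-holomorphic series $\sum_{j\ge 0}a_j(it)z_2^j$ modulo a flat remainder. Taking real parts and reading off Taylor coefficients at $z_2=0$ from $\mathrm{Re}\,\Phi=0$, I obtain $\mathrm{Re}\,a_0(it)=0$ and $a_j(it)=0$ for every $j\ge 1$ and every $|t|<\delta_0$.

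For $n\ge 1$, $F_n=n!\,a_n$ is holomorphic on the half-disk, $\mathcal C^\infty$ up to the boundary arc, and vanishes on that arc, so by boundary uniqueness $F_n\equiv 0$ and extends trivially across $z_1=0$. For $n=0$, $F_0$ has purely imaginary boundary values on the segment, and the classical Schwarz reflection principle across the imaginary axis extends $F_0$ holomorphically to a full neighborhood of $0$. Applying $\partial_{z_1}^m$ to each extension then proves the lemma. I expect the main technical point to be the chain-rule bookkeeping verifying that only the pure $z_2$-derivatives $\partial_{z_2}^j h_1(it,0)$ survive in the Taylor expansion of $\Phi$ at $z_2=0$; once this flatness is secured, the rest is a routine slice-by-slice Schwarz-reflection argument.
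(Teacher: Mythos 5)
Your proposal is correct and takes essentially the same route as the paper: differentiate the tangency identity (\ref{eq3}) at $z_2=0$, use the infinite-order flatness of $P$ to kill every term except the pure derivatives of $h_1$, and conclude by Schwarz reflection in the $z_1$-variable. The only differences are organizational: you reflect just the slice functions $F_n(z_1)=\partial_{z_2}^n h_1(z_1,0)$ and obtain the $z_1$-derivatives by differentiating the extensions (in fact recording the sharper boundary information $F_n(it)=0$ for $n\geq 1$), whereas the paper applies the reflection principle separately to each mixed derivative $\partial_{z_1}^m\partial_{z_2}^n h_1(z_1,0)$ via the weaker relation $\mathrm{Re}\bigl[i^m\partial_{z_1}^m\partial_{z_2}^n h_1(it,0)\bigr]=0$.
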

\begin{proof}
Since $\nu_0(P')=+\infty$, it follows from (\ref{eq3}) with $t=0$ that $\mathrm{Re} h_1(it,0)=0$ for all $t\in (-\delta_0,\delta_0)$. By the Schwarz reflection principle, $h_1(z_1,0)$ can be extended to a holomorphic function on a neighborhood of $z_1=0$. For any $m,n\in\mathbb N$, taking $\frac{\partial^{m+n}}{\partial t^m\partial z_2^n}\mid_{z_2=0}$ of both sides of the equation (\ref{eq3}) one has
$$
 \mathrm{Re}\Big[i^m \frac{\partial^{m+n}}{\partial z_1^m\partial z_2^n} h_1(it,0)\Big]=0
$$
for all $t\in (-\delta_0,\delta_0)$. Again by the Schwarz reflection principle, $\frac{\partial^{m+n}}{\partial z_1^m\partial z_2^n} h_1(z_1,0)$ can be extended to be holomorphic in a neighborhood of $z_1=0$, which completes the proof.
\end{proof}
\begin{corollary} \label{coro2}
If $h_1$ vanishes to infinite order at $(0,0)$, then $h_1\equiv 0$.
\end{corollary}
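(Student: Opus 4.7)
The plan is to combine the infinite-order vanishing hypothesis with the extension property from Lemma~\ref{lem6}, and then invoke the identity principle.

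First I would unpack the hypothesis: since $h_1$ is $\mathcal{C}^\infty$ on $\overline{\Omega_P}\cap U$ and holomorphic on $\Omega_P\cap U$, the assumption that $h_1$ vanishes to infinite order at $(0,0)$ means that $\frac{\partial^{m+n}}{\partial z_1^m\partial z_2^n}h_1(0,0)=0$ for every $m,n\in\mathbb{N}$ (the $\bar z$-derivatives vanish automatically by holomorphy in the interior, taking boundary limits). By Lemma~\ref{lem6}, for each pair $(m,n)$ the slice function
\[
g_{m,n}(z_1):=\frac{\partial^{m+n}}{\partial z_1^m\partial z_2^n}h_1(z_1,0)
\]
extends to be holomorphic in a fixed neighborhood $V$ of $z_1=0$ in $\mathbb{C}$. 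The Taylor coefficients of $g_{m,n}$ at $z_1=0$ are precisely $\frac{1}{k!}\frac{\partial^{k+m+n}}{\partial z_1^{k+m}\partial z_2^n}h_1(0,0)=0$, so $g_{m,n}\equiv 0$ on $V$.

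Next I would transfer this vanishing from the boundary slice $\{z_2=0\}$ into the domain. Choose $\delta>0$ so small that whenever $|z_1|<\delta$ with $\mathrm{Re}\,z_1<0$ and $|z_2|<\delta$, the point $(z_1,z_2)$ lies in $\Omega_P\cap U$ (this is possible because $P(0)=0$ and $P$ is continuous). Fix any such $z_1$. Then $z_2\mapsto h_1(z_1,z_2)$ is holomorphic in $|z_2|<\delta$, and by $\mathcal{C}^\infty$-smoothness up to $\{z_2=0\}$ its Taylor coefficients at $z_2=0$ are
\[
\frac{1}{n!}\frac{\partial^n h_1}{\partial z_2^n}(z_1,0)=\frac{1}{n!}g_{0,n}(z_1)=0
\]
for every $n$. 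Hence $h_1(z_1,z_2)\equiv 0$ on the open set
\[
W:=\bigl\{(z_1,z_2)\colon |z_1|<\delta,\ \mathrm{Re}\,z_1<0,\ |z_2|<\delta\bigr\}\subset \Omega_P\cap U.
\]

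Finally I would conclude by the identity theorem. Shrinking $U$ if necessary so that $\Omega_P\cap U$ is connected (which is automatic for a sufficiently small neighborhood of the boundary point $(0,0)$ of the smooth boundary $b\Omega_P$), the holomorphic function $h_1$ on $\Omega_P\cap U$ vanishes on the nonempty open set $W$ and therefore vanishes identically on $\Omega_P\cap U$. The only real obstacle is the mild one of checking that $\Omega_P\cap U$ is connected after shrinking and that $W\subset\Omega_P\cap U$; once the extension from Lemma~\ref{lem6} is in hand the rest is a direct application of Taylor expansion in $z_2$ and the identity principle.
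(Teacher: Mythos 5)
Your overall strategy is the same as the paper's: use Lemma \ref{lem6} together with the infinite-order vanishing to conclude that the slice functions $g_{m,n}(z_1)=\frac{\partial^{m+n}}{\partial z_1^m\partial z_2^n}h_1(z_1,0)$ extend holomorphically past $z_1=0$, vanish to infinite order there, and hence vanish identically; then finish by Taylor expansion and the identity theorem. Up to and including $g_{m,n}\equiv 0$ your argument is correct and matches the paper.

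The gap is in the transfer step: no $\delta>0$ with the property you claim exists. Since $P(z_2)=q(|z_2|)>0$ for $0<|z_2|<\epsilon_0$, any point with $z_2\neq 0$ fixed and $-q(|z_2|)<\mathrm{Re}\,z_1<0$ satisfies $\mathrm{Re}\,z_1+P(z_2)>0$, so it lies outside $\Omega_P$; hence your set $W=\{|z_1|<\delta,\ \mathrm{Re}\,z_1<0,\ |z_2|<\delta\}$ is never contained in $\Omega_P\cap U$, and continuity of $P$ with $P(0)=0$ does not help, because for fixed $z_1$ with $\mathrm{Re}\,z_1<0$ the $z_2$-slice of $\Omega_P$ is only the disc $|z_2|<q^{-1}(-\mathrm{Re}\,z_1)$, whose radius shrinks to $0$ as $\mathrm{Re}\,z_1\to 0^-$. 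So the Taylor expansion of $h_1(z_1,\cdot)$ at $z_2=0$ is available only on that smaller disc, not on $|z_2|<\delta$. The repair is routine and leads back to the paper's argument: either (a) for each $z_1$ with $\mathrm{Re}\,z_1<0$ small conclude $h_1(z_1,\cdot)\equiv 0$ on $|z_2|<q^{-1}(-\mathrm{Re}\,z_1)$, and note that the union of these slice discs is precisely a nonempty open piece of $\Omega_P$ near the origin; or (b) as in the paper, fix an interior point $(-\epsilon,0)\in\Omega_P\cap U$ with $\epsilon>0$ small, observe that all coefficients $\frac{\partial^{m+n}}{\partial z_1^m\partial z_2^n}h_1(-\epsilon,0)$ vanish because $g_{m,n}\equiv 0$ on a neighborhood of $z_1=0$ in $\{\mathrm{Re}\,z_1\le 0\}$ containing $-\epsilon$, and expand $h_1$ in a polydisc centered at $(-\epsilon,0)$ contained in $\Omega_P\cap U$. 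With either fix, your concluding application of the identity theorem on the connected set $\Omega_P\cap U$ goes through as stated.
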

\begin{proof} 
Since $h_1$ vanishes to infinite order at $(0,0)$, $\frac{\partial^{m+n}}{\partial z_1^m\partial z_2^n} h_1(z_1,0)$ also vanishes to infinite at $z_1=0$ for all $m,n\in\mathbb N$. Moreover, by Lemma \ref{lem6} these functions are holomorphic in a neighborhood of $z_1=0$. Therefore, $\frac{\partial^{m+n}}{\partial z_1^m\partial z_2^n} h_1(z_1,0)\equiv 0$ for every $m,n\in\mathbb N$.  

Expand $h_1$ into the Taylor series at $(-\epsilon,0)$ with $\epsilon>0$ small enough so that  
$$
h_1(z_1,z_2)=\sum_{m,n=0}^\infty\frac{1}{m!n!} \frac{\partial^{m+n}}{\partial z_1^m\partial z_2^n} h_1(-\epsilon,0)(z_1+\epsilon)^m z_2^n 
$$ 

Since $\frac{\partial^{m+n}}{\partial z_1^m\partial z_2^n} h_1(-\epsilon,0)=0$ for all $m,n\in\mathbb N$, $h_1\equiv 0$ on a neighborhood of $(-\epsilon,0)$, and thus $h_1\equiv 0$ on $\Omega_P$.  

\end{proof}

\begin{proof}[Proof of Theorem \ref{Th1}]

Denote by $D_P(r):=\{z_2\in \mathbb C: |z_2|< q^{-1}(r) \}~(r>0)$. For each $z_1$ with $\mathrm{Re}(z_1)<0$, we have
\begin{equation}\label{eq4}
h_1(z_1,z_2)=\sum_{n=0}^\infty a_n(z_1)z_2^n, ~\forall ~ z_2\in D_P(-\mathrm{Re}(z_1)),
\end{equation}
where $a_n(z_1)=\frac{\partial^n}{\partial z_2^n}h_1(z_1,0)$ for every $n\in \mathbb N$. Since $h_1\in \mathrm{Hol}(\Omega_P\cap U)\cap \mathcal{C^\infty}(\overline{\Omega_P}\cap U)$, $a_n\in Hol(\mathcal{H}\cap U_1)\cap \mathcal{C}^\infty(\overline{\mathcal{H}}\cap U_1)$ for every $n=0,1,\ldots$, where $\mathcal{H}:=\{z_1\in \mathbb C\colon \mathrm{Re}(z_1)<0\}$ and $U_1$ is a neighborhood of $z_1=0$ in $\mathbb C_{z_1}$. Moreover, expanding the function $g_{z_1}(z_2):=h_1(z_1,z_2)$ into the Fourier series we can see that (\ref{eq4}) still holds for all $z_2\in \overline{D_P(-\mathrm{Re}(z_1))}$. Therefore, the function $h_1(it-P(z_2),z_2)$ can be rewritten as follows:
$$
h_1(it-P(z_2),z_2)=\sum_{n=0}^\infty a_n(it-P(z_2))z_2^n,
$$
for all $(t,z_2)\in (-\delta_0,\delta_0)\times \Delta_{\epsilon_0}$, where $\delta_0>0, \epsilon_0>0$ are small enough.

Similarly, we also have
$$
h_2(it-P(z_2),z_2)=\sum_{n=0}^\infty b_n(it-P(z_2))z_2^n
$$
for all $(t,z_2)\in (-\delta_0,\delta_0)\times \Delta_{\epsilon_0}$, where $b_n \in Hol(\mathcal{H}\cap U_1)\cap \mathcal{C}^\infty(\overline{\mathcal{H}}\cap U_1)$ for every $n=0,1,\ldots$.

Now we shall prove that $h_1\equiv 0$. Indeed, aiming for contradiction, we suppose that $h_1\not\equiv 0$. If $h_1$ vanishes to infinite order at $(0,0)$, then by Corollary \ref{coro2} one gets $h_1\equiv 0$. So, $h_1$ vanishes to finite order at $(0,0)$. It follows from (\ref{eq3}) that $h_2$ also vanishes to finite order at $(0,0)$, for otherwise $h_1$ vanishes to infinite order at $(0,0)$.

Denote by
\begin{equation}\label{eq6}
\begin{split}
m_0&:=\min\Big\{m\in \mathbb N \colon \frac{\partial^{m+n}}{\partial^m z_1 \partial^n z_2}h_1(0,0)\ne 0~\text{for some}~ n\in\mathbb N\Big\}, \\
n_0&:=\min\Big\{n\in \mathbb N \colon \frac{\partial^{m_0+n}}{\partial^{m_0} z_1 \partial^n z_2}h_1(0,0)\ne 0\Big\}, \\
k_0&:=\min\Big\{m\in \mathbb N \colon \frac{\partial^{k+l}}{\partial^k z_1 \partial^l z_2}h_2(0,0)\ne 0~\text{for some}~ l\in\mathbb N\Big\}, \\
l_0&:=\min\Big\{l\in \mathbb N \colon \frac{\partial^{k_0+l}}{\partial^{k_0} z_1 \partial^l z_2}h_2(0,0)\ne 0\Big\}.
\end{split}
\end{equation}

Since $\nu_0(P)=+\infty$, one obtains that
\begin{equation}\label{eq6}
\begin{split}
h_1(i\alpha P(z_2)-P(z_2),z_2)&= a_{m_0,n_0} (i\alpha-1)^{m_0}(P(z_2))^{m_0}\big(z_2^{n_0}+o(|z_2|^{n_0}\big),\\
h_2(i\alpha P(z_2)-P(z_2),z_2)&= b_{k_0,l_0} (i\alpha-1)^{k_0}(P(z_2))^{k_0}\big(z_2^{l_0}+o(|z_2|^{l_0}\big),
\end{split}
\end{equation}
where $a_{m_0,n_0}:=\frac{\partial^{m_0+n_0}}{\partial^{m_0} z_1 \partial^{n_0} z_2}h_1(0,0)\ne 0,~b_{k_0,l_0}:=\frac{\partial^{k_0+l_0}}{\partial^{k_0} z_{l_0} \partial^{l_0} z_2}h_2(0,0) \ne 0$, and $\alpha\in\mathbb R$ will be chosen later.

Now it follows from (\ref{eq3}) with $t=\alpha P(z_2)$ that 
\begin{equation}\label{eq7}
\begin{split}
\mathrm{Re} &\Big[ a_{m_0 n_0}(i\alpha -1)^{m_0}(P(z_2))^{m_0}\big(z_2^{n_0}+o(|z_2|^{n_0})\big)+
 b_{k_0 l_0}(i\alpha -1)^{k_0}(z_2^{l_0}+o(|z_2|^{l_0}) \\
&\times (P(z_2))^{k_0} P'(z_2)   \Big ]=0
\end{split}
 \end{equation}
for all $z_2\in \Delta_{\epsilon_0}$ and for all $\alpha \in \mathbb R$ small enough. We note that in the case $n_0=0$ and $\mathrm{Re}(a_{m_0 0})=0$, $\alpha$ can be chosen in such a way that $\mathrm{Re}\big( (i\alpha-1)^{m_0}a_{m_0 0}\big)\ne 0$. Then the above equation yields that $k_0>m_0$. Furthermore, since $P$ is rotational, it follows that $\mathrm{Re}(iz_2P'(z_2))\equiv 0$ (see \cite[Lemma $4$]{Kim-Ninh}), and hence we can assume that $\mathrm{Re}(b_{10})\ne 0$ for the case that $k_0=1,l_0=0$. However, (\ref{eq7}) contradicts Lemma $3$ in \cite{Kim-Ninh}. Therefore, $h_1\equiv 0$.

Granted $h_1\equiv 0$, (\ref{eq3}) is equivalent to
\begin{equation}\label{eq8}
\begin{split}
\mathrm{Re} \Big[ P'(z_2)h_2(it-P(z_2),z_2)   \Big ]=0
\end{split}
 \end{equation}
for all $(t,z_2)\in (-\delta_0,\delta_0)\times \Delta_{\epsilon_0}$. Thus, for each $z_2\in \Delta^*_{\epsilon_0}$ the function $g_{z_2}$ defined by setting $g_{z_2}(z_1):=h_2(z_1,z_2)$ is holomorphic in $\{z_1\in \mathbb C:\mathrm{Re}(z_1)<-P(z_2) \}$ and $\mathcal{C}^\infty$-smooth up to the real line $\{z_1\in \mathbb C: \mathrm{Re}(z_1)=-P(z_2)\}$. Moreover, $g_{z_2}$ maps this line onto the real line $\mathrm{Re}(P'(z_2)w)=0$ in the complex plane $\mathbb C_w$. Thus, by the Schwarz reflection principle, $g_w$ can be extended to be holomorphic in a neighborhood $U$ of $z_1=0$ in the plane $\mathbb C_{z_1}$. (The neighborhood $U$ is independent of $z_2$.)  

Now our function $h_2$ is holomorphic in $z_1\in U$ for each $z_2\in\Delta^*_{\epsilon_0}$ and holomorphic in $(z_1,z_2)$ in the domain $\{(z_1,z_2)\in \mathbb C^2: \mathrm{Re}(z_1)<0,~ |z_2|<q^{-1}(-\mathrm{Re}(z_1))\}$. Therefore, it follows from Chirka's curvilinear Hartogs' lemma (see \cite{Ch}) that $h_2$ can be extended to be holomorphic in a neighborhood of $(0,0)$ in $\mathbb C^2$. Moreover, by (\ref{eq8}) and by \cite[Theorem $3$]{HN} we conclude that $h_2(z_1,z_2)\equiv i\beta z_2$ for some $\beta\in\mathbb R^*$. So, the proof is complete.

\end{proof}
\section{Extension of automorphisms }

If $f: D\to \mathbb C^N$ is a continuous map on a domain $D\subset \mathbb C^n$  and $z_0\in \partial D$, we denote by $\mathcal{C}(f,z_0)$ the cluster set of $f$ at $z_0$:
$$
\mathcal{C}(f,z_0)=\{w\in \mathbb C^N \colon w=\lim f(z_j), z_j\in D, \text{and}~\lim z_j=z_0 \}.
$$
\begin{define}[see \cite{Bel81}]
When $\Gamma$ be an open subset of the boundary of a smooth domain $D$, we say that $\Gamma$ satisfies \emph{local condition} $\mathrm{R}$ if for each $z\in \Gamma$, there is an open set $V$ in $\mathbb{C}^n$ with $z\in V$ such that for each $s$, there is an $M$ such that 
$$
\mathrm{P}\big(W^{s+M}(D\cap V)\big)\subset W^{s}(D\cap V).
$$
We say that $D$ satisfies \emph{local condition} $\mathrm{R}$ at $z_0\in bD$ if there exists an open subset of the boundary $bD$ containing $z_0$ and satisfying local condition $\mathrm{R}$.
\end{define}

\begin{define}
Let $D, G$ be domains in $\mathbb C^n$ and let $F: [0,+\infty)\to [0,+\infty)$ be an inceasing function with $F(0)=0$. Let $z_0\in bD$ and $w_0\in bG$. We say that $D,G$ satisfies the property $(D,G)_{(z_0,w_0)}^F$ if for each proper holomorphic mapping $f: D\to G$, there exist neighborhoods $U$ and $V$ of $z_0$ and $w_0$ respectively such that
$$
d_G(f(z))\leq  F(d_D(z))
$$
for any $z\in U\cap D$ such that $f(z)\in V\cap G$.
\end{define}
For the case $D$ and $D$ are bounded pseudoconvex domains with generic corners, D. Chakrabarti and K. Verma \cite[Propsition 5.1]{CV14} proved there exists a $\delta\in (0,1)$ such that 
$$
(d_D(z))^{1/\delta}\lesssim d_G(f(z))\lesssim   (d_D(z))^\delta
$$
for all $z\in D$, which is a generalization of  \cite{DF79,Ber91}. Consequently, $D,G$ satisfies the property $(D,G)_{(z_0,w_0)}^F$, where $F(t)=t^\delta$, for any $z_0\in bD$ and $w_0\in bG$. 
 
We now recall the general H\"{o}lder continuity (see \cite{Khanh13}). Let $f$ be an increasing function such that $\lim_{t\to +\infty}f(t)=+\infty$. For $\Omega\subset \mathbb C^n$, define $f$-H\"{o}lder space on $\bar \Omega$ by
$$
\Lambda^f(\overline{\Omega})=\{u\colon \|u\|_\infty+\sup_{z,z+h\in \bar\Omega}f(|h|^{-1})|u(z+h)-u(z)|<+\infty\}.
$$
Note that the $f$-H\"{o}lder space includes the standard H\"{o}lder space $\Lambda_\alpha(\overline{\Omega})$ by taking $f(t)=t^\alpha$ with $0<\alpha<1$.

The following lemma is a slight generalization of \cite[Theorem 1.4]{Khanh13}. 
\begin{lemma} \label{lem2}
 Let $D$ and $G$ be domains in $\mathbb C^n$ with $\mathcal{C}^2$-smooth boundaries. Let $g:[1,+\infty)\to [1,+\infty)$ and $F: [0,+\infty)\to [0,+\infty)$ be nonnegative increasing functions with $F(0)=0$ such that the function $\frac{1}{\delta g\Big(1/F\big(\delta\big)\Big)}$ is decreasing and $\int\limits_0^d\frac{1}{\delta g\Big(1/F\big(\delta\big)\Big)}d\delta<+\infty$ for $d>0$ small enough. Assume that $D$ and $G$ satisfies the property $(D,G)_{(z_0,w_0)}^F$ and $G$ is $g$-admissible at $w_0$. Let $f:D\to G$ be a proper map such that $w_0\in \mathcal{C}(f,z_0)$. Then there exist neighborhoods $U$ and $V$ of $z_0$ and $w_0$, respectively, such that $f$ can be extended as a general H\"{o}der continuous map $\hat f: U\cap \overline{D}\to V\cap \overline{G}$ with a rate $h(t)$ defined by
$$
(h(t))^{-1}:=\int^{t^{-1}}_0\frac{1}{\delta g\Big(1/F\big(\delta\big)\Big)}d\delta.
$$
\end{lemma}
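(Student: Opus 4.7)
The proof adapts that of \cite[Theorem 1.4]{Khanh13}: the only substantive change is to replace Khanh's power-law comparison $\delta_G(f(z)) \lesssim \delta_D(z)^\tau$ (available for bounded pseudoconvex domains with generic corners via Diederich--Fornaess / Bell--Catlin estimates) by the general boundary-distance comparison $\delta_G(f(z)) \le F(\delta_D(z))$ furnished by the hypothesis $(D,G)^F_{(z_0,w_0)}$. The monotonicity and integrability assumptions placed on $\delta \mapsto 1/[\delta\, g(1/F(\delta))]$ are exactly what is required to make the same bookkeeping go through with $F$ in the role of $\delta^\tau$.

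Concretely, I would fix neighborhoods $U \ni z_0$ and $V \ni w_0$ realizing simultaneously: (a) the $g$-admissibility lower bound $K_G(w,X) \gtrsim g(\delta_G(w)^{-1})|X|$ on $V \cap G$; (b) the standard $\mathcal{C}^2$ upper estimate $K_D(z,X) \lesssim |X|/\delta_D(z)$ on $U \cap D$; (c) $\delta_G(f(z)) \le F(\delta_D(z))$ for $z \in U \cap D$, after shrinking $U$ so that $f(U \cap D) \subset V \cap G$, which uses the cluster condition $w_0 \in \mathcal{C}(f,z_0)$ together with properness. For $z_1, z_2 \in U \cap D$ close, I would connect them by a path $\gamma$ in $D$ that first descends along the inner normal from $z_1$ to depth $\asymp |z_1 - z_2|$, slides across at that depth, and returns outward to $z_2$. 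By Kobayashi distance-decrease, the $K_G$-length of $f \circ \gamma$ is dominated by the $K_D$-length of $\gamma$, which is controlled by an explicit quantity via (b). Integrating the lower bound (a) along $f \circ \gamma$, substituting $1/\delta_G(f(z)) \ge 1/F(\delta_D(z))$ via (c), and rearranging the resulting line integral over the level sets $\{\delta_D = \mathrm{const}\}$ should produce
$$
|f(z_1)-f(z_2)| \lesssim \int_0^{|z_1-z_2|} \frac{d\delta}{\delta\, g(1/F(\delta))} = \frac{1}{h(|z_1-z_2|^{-1})}.
$$
This is precisely the announced $\Lambda^h$ modulus of continuity on $U \cap D$; it yields a continuous extension $\hat f: U \cap \overline{D} \to V \cap \overline{G}$ by uniform continuity, and the inequality persists on the closure, placing $\hat f$ in $\Lambda^h(U \cap \overline{D})$ with rate $h$.

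The main technical obstacle is the substitution step that converts the $K_G$ lower bound into a source-side integrand in $F(\delta_D)$. This uses essentially that $\delta \mapsto 1/[\delta\, g(1/F(\delta))]$ is decreasing, which permits the change of variables collapsing the Kobayashi length of $f \circ \gamma$ into the required one-variable integral and makes the dominant contribution come from the ``deepest'' part of the path. Once that estimate is secured, the remaining ingredients---Kobayashi contraction, the $\mathcal{C}^2$ upper bound for $K_D$, and the identification of the resulting integral with $1/h(\,\cdot^{-1})$---are essentially bookkeeping parallel to \cite{Khanh13}.
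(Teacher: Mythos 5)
There is a genuine gap at the step where you write that one may shrink $U$ so that $f(U\cap D)\subset V\cap G$, ``which uses the cluster condition $w_0\in\mathcal{C}(f,z_0)$ together with properness.'' The hypothesis $w_0\in\mathcal{C}(f,z_0)$ only says that $w_0$ is \emph{one} limit value of $f$ along \emph{some} sequence tending to $z_0$; the cluster set could a priori be a nontrivial continuum in $bG$, and properness only forces cluster values of boundary points to lie in $bG$ --- it does not localize the image. In fact, the statement $f(U\cap D)\subset V\cap G$ for small $U$ is essentially the continuity of $f$ at $z_0$, which is the main thing to be proved, not an input. This matters because all three estimates you invoke are conditional: the $g$-admissibility lower bound for $K_G$ holds only at points of $V\cap G$, and the property $(D,G)^F_{(z_0,w_0)}$ gives $\delta_G(f(z))\le F(\delta_D(z))$ only when $f(z)\in V\cap G$. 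Hence the derivative bound $|f'(z)X|\lesssim |X|/\bigl(\delta_D(z)\,g(1/F(\delta_D(z)))\bigr)$ is available only where the image is already known to sit in $V$, and you cannot integrate it along your ``dip in, slide across, come back'' path unless you know the image of the whole path stays near $w_0$ --- which is precisely what you assumed. Your argument is therefore circular at this point.

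The paper closes this gap with Henkin's localization technique: assuming $f$ does \emph{not} extend continuously at $z_0$, it produces a ball $B\subset V$ about $w_0$, sequences $z_j\to z_0$ with $f(z_j)\to w_0$ and $z_j'\to z_0$ with $f(z_j')\in\partial B$, joins $z_j$ to $z_j'$ by the same kind of path $\gamma_j$, and --- crucially --- stops the path at the \emph{first exit time} $t_j$ of $f\circ\gamma_j$ from $\bar B$, so that the conditional derivative estimate is legitimately applicable on $[0,t_j]$. Integrating it there gives $|f(z_j)-f\circ\gamma_j(t_j)|\lesssim 1/h(1/l_j)+1/g\bigl(1/F(l_j)\bigr)\to 0$, contradicting $\mathrm{dist}(w_0,\partial B)>0$. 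Only after continuity at $z_0$ is established in this way can one assume $f(U\cap D)\subset V\cap G$ and run the Hölder-modulus bookkeeping (the paper then simply cites \cite[Lemma 1.4]{Khanh13}, whose proof is the path computation you sketch). So your derivative estimate and the final bookkeeping agree with the paper, but the proposal is missing the exit-time/contradiction argument that makes the local estimates usable; without it the proof does not go through.
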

\begin{proof}
Since $G$ is $g$-admissible at $w_0$, using the Schwarz-Pick lemma for the Kobayashi metric and the upper bound of Kobayashi metric, we obtain the following estimate
$$
g(\delta^{-1}_G(f(z)))|f'(z)X|\lesssim K_{G}(f(z),f'(z)X)\leq K_D(z,X)\lesssim \delta^{-1}_D(z) |X|
$$
for any $z\in D\cap  U$ such that $f(z)\in V\cap G$ and $X\in T^{1,0}\mathbb C^n$. Moreover, since the property $(D,G)_{(z_0,w_0)}^F$ holds, we may assume that
$$
\delta_{G}(f(z))\leq F(\delta_D(z))
$$
for any $z\in D\cap  U$ such that $f(z)\in V\cap G$. Therefore,
\begin{equation}\label{22-10}
|f'(z)X|\lesssim \frac{1}{\delta_D(z)g\Big(1/F\big(\delta_D(z)\big)\Big)} |X|
\end{equation}
for any $z\in D\cap U$ such that $f(z)\in V\cap G$ and $X\in T^{1,0}\mathbb C^n$. 

By using the Henkin's technique (see \cite{Ber92, Suk}), we are going to prove that $f$ extends continuously to $z_0$. Indeed, suppose that $f$ does not extend continuously to $z_0$: there are an open ball $B\subset V$ (with center at $w_0$) and a neighborhoods basis $U_j$ of $z_0$ such that $f(D\cap U_j)$ is connected and never contained in $B$. Then, since $w_0\in \mathcal{C}_\Omega(f,z_0)$, there exists a sequence $\{z_j'\}$, $z_j'\in U_j$ such that $f(z_j')\in \partial B$ and $\lim f(z_j')=w_0'\in \partial B\cap\partial G$. 

Let $\{z_j\}\subset \Omega\cap U$ such that $\lim z_j=z_0$ and $\lim f(z_j)=w_0$. Let $l_j:=|z_j-z_j'|$ and $\gamma_j:[0,3l_j]\to \Omega\cap U$ be a $\mathcal{C}^1$-path such that:
\begin{itemize}
\item[(a)] $\gamma_j(0)=z_j$ and $\gamma_j(3l_j)=z_j'$.
\item[(b)] $\delta_\Omega(\gamma(t))\geq t$ on $[0,l_j]$; $\delta_\Omega(\gamma(t))\geq l_j$ on $[l_j,2l_j]$; $\delta_\Omega(\gamma(t))\geq 3l_j-t$ on $[2l_j,3l_j]$.
\item[(c)] $\|\frac{d\gamma_j(t)}{dt}\|\lesssim 1$, $t\in [0,3l_j]$.
\end{itemize}
(See \cite[Prop. 2, p. 203]{HL84}).

Choose $t_j\in [0,3l_j]$ such that $f\circ \gamma_j([0,t_j])\subset \bar B$ and $f\circ \gamma_j(t_j)\in \partial B$. It follows from (\ref{22-10}), (b) and (c) that $|f(z_j)-f\circ \gamma_j(t_j)|\lesssim 1/h(1/l_j)+1/g\Big(1/F\big(l_j\big)\Big)\to 0$ as $j\to \infty$: a contradiction. Hence, $f$ extends continuously to $z_0$.

We may now assume that $f(D\cap U)\subset G\cap V$ and apply \cite[Lemma 1.4]{Khanh13} for proving that $f$ can be extended to a $h$-H\"{o}der continuous map $\hat f: \overline{D}\cap  U\to \overline{G}\cap V$ with the rate $h(t)$ defined by
$$
((h(t))^{-1}:=\int^{t^{-1}}_0 \frac{1}{\delta g\Big(1/F\big(\delta\big)\Big)} d\delta.
$$
\end{proof}

The following lemma is a local version of Fefferman's theorem (see \cite{Bel81}).
\begin{lemma}\label{lem3}
Suppose that $D$ and $G$ are $\mathcal{C}^\infty$-smooth domains in $\mathbb  C^n$ satisfying local condition $\mathrm{R}$ at $z_0\in bD$ and $w_0\in bG$ respectively. Assume that $D$ and $G$ are pseudoconvex near $z_0$ and $w_0$ respectively. Let $g:[1,+\infty)\to [1,+\infty)$ and $F: [0,+\infty)\to [0,+\infty)$ be nonnegative increasing functions with $F(0)=0$ such that the function $\frac{1}{\delta g\Big(1/F\big(\delta\big)\Big)}$ is decreasing and $\int\limits_0^d\frac{1}{\delta g\Big(1/F\big(\delta\big)\Big)}d\delta<+\infty$ for $d>0$ small enough. Suppose that $D$ and $G$ satisfy the property $(D,G)^F_{(z_0,w_0)}$. Let $f$ be a biholomorphic mapping of $D$ onto $G$ such that $w_0\in \mathcal{C}(f,z_0)$. Then $f$ extends smoothly to $bD$ in some neighborhood of the point $z_0$.
\end{lemma}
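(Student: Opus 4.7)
The plan is to combine two ingredients: a preliminary $h$-Hölder continuous extension of $f$ furnished by Lemma \ref{lem2}, and a Bell--Ligocka type bootstrap that uses local condition $\mathrm{R}$ together with the transformation rule for the Bergman projection to raise the regularity to $C^\infty$. In spirit this is the local version of Fefferman's theorem proved by Bell \cite{Bel81}, adapted to the general $g$-admissibility framework of \cite{Khanh13}.

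First I would invoke Lemma \ref{lem2} directly. Under the stated hypotheses---most importantly the integrability of $\frac{1}{\delta g(1/F(\delta))}$ and the property $(D,G)^F_{(z_0,w_0)}$, together with $g$-admissibility of $G$ at $w_0$ (which is part of this regularity framework)---and since the biholomorphism $f$ is in particular proper with $w_0\in\mathcal{C}(f,z_0)$, Lemma \ref{lem2} produces neighborhoods $U\ni z_0$ and $V\ni w_0$ and an $h$-Hölder continuous extension $\hat f:U\cap\overline{D}\to V\cap\overline{G}$ with $\hat f(z_0)=w_0$. Shrinking $U$ if necessary, I may arrange $\hat f(U\cap bD)\subset V\cap bG$.

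With continuous boundary values secured, I would then run Bell's smoothing argument in localized form. The transformation formula
$$
(P_G\varphi)\circ f \;=\; (\det f')^{-1}\,P_D\bigl((\varphi\circ f)\,\overline{\det f'}\,\bigr),
$$
valid for $\varphi\in C^\infty_0(G)$, localizes modulo smooth errors to the Bergman projections of $U\cap D$ and $V\cap G$. Local condition $\mathrm{R}$ at $w_0$ says that for every Sobolev index $s$ there is $M$ with $P_G\bigl(W^{s+M}(V\cap G)\bigr)\subset W^s(V\cap G)$. Choosing $\varphi$ so that $P_G\varphi$ coincides near $w_0$ with a prescribed coordinate function $w_j$ (the density lemma underlying Bell's method) and pushing this through the transformation formula yields $w_j\circ f\in W^s(U\cap D)$ for every $s$; local condition $\mathrm{R}$ at $z_0$ controls the remaining $P_D$ term. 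A Sobolev embedding then forces each component of $f$ to lie in $C^\infty(U\cap\overline{D})$, which is the desired smooth extension.

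The principal technical difficulty is the localization step. Because $f$ is only biholomorphic on the global domains $D$ and $G$, one must verify that commuting cutoffs through $P_D$ and $P_G$ produces only boundary-smooth error terms in the chosen neighborhoods; it is the continuous extension $\hat f$ that permits cutoffs supported near $z_0$ to be matched consistently with cutoffs supported near $w_0$ via $f$, and it is local condition $\mathrm{R}$ at \emph{both} points that controls all the resulting projection terms. This is precisely the machinery developed in \cite{Bel81}, and once Lemma \ref{lem2} is in hand the remainder of the argument is a direct translation of that work to the present setting.
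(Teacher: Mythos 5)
Your proposal is correct and follows essentially the same route as the paper: first invoke Lemma \ref{lem2} to obtain a continuous ($h$-H\"older) extension near $z_0$ with image near $w_0$, then conclude smoothness from Bell's local condition $\mathrm{R}$ version of Fefferman's theorem. The only difference is presentational: the paper localizes so that $f(U\cap D)=V\cap G$ are bounded smooth pseudoconvex domains and simply cites \cite[Section 7]{Bel81}, whereas you sketch the Bergman-projection transformation formula and density argument that constitute that citation.
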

\begin{proof}
By Lemma \ref{lem2}, we may assume that there exist neighborhoods $U$ and $V$ of $z_0$ and $w_0$ respectively such that $f$  extends continuously to $U\cap \bar D$. Moreover, we may assume that $f(U\cap D)=V\cap G$ and $U\cap D$ and $V\cap G$ are  bounded $\mathcal{C}^\infty$-smooth pseudoconvex domains. Therefore, the proof follows from the theorem in \cite[Section 7]{Bel81}.
\end{proof}
\begin{lemma}\label{lem4}
Let $D \subset \mathbb C^n$ be a $\mathcal{C}^2$-smooth domain and let $0<\eta<1$. Assume that $D$ is pseudoconvex near $z_0\in bD$ and $D$ has $u$-property at $z_0$, where  $u: [1,+\infty)\to [1,+\infty)$ is a smooth monotonic increasing function with $u(t)/t^{1/2}$ decreasing and $\int\limits_{t_0}^{+\infty} \frac{da}{au(a)}<+\infty$ for some $t_0>1$. Then $D$ is  $g$-admissible at $z_0$, where $g$ is a function defined by
$$
(g(t))^{-1}=\int_t^{+\infty} \frac{da}{au(a)},~ t_0\leq t<+\infty.
$$
Moreover, the property $(D,G)_{(z_0,w_0)}^{F_2}$ holds for any $\mathcal{C}^2$-smooth domain $G\subset \mathbb C^n$ and $w_0\in bG$, where $F_2(t):=c_2 t^\eta,~t>0$, for some $c_2>0$.
\end{lemma}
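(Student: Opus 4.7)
The plan is to establish the two conclusions independently, each by a separate Sibony-type plurisubharmonic construction.

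For the $g$-admissibility, the hypotheses on $u$ reproduce exactly condition (i) of the strong $u$-property used to define $g$. I would invoke the Sibony--Catlin lower bound for the Kobayashi metric: dyadically patch together members of the family $\{\phi_\eta\}$ across the strips $\{-2^{-k-1}<r<-2^{-k}\}\cap U$ to build a bounded plurisubharmonic function $\Phi$ on $D\cap U$ whose Levi form at a point $z$ dominates a constant multiple of $g(\delta_D(z)^{-1})^2\,\mathrm{Id}$; the function $g$ is precisely what arises from summing contributions of size $u(2^k)^{-2}$ via the integral $\int\frac{da}{au(a)}$. The standard extremal-disc argument then yields $K_D(z,X)\gtrsim g(\delta_D(z)^{-1})|X|$ on some neighborhood of $z_0$, which is the content of the main Kobayashi estimate of \cite{Khanh13} applied to the present data.

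For the property $(D,G)^{F_2}_{(z_0,w_0)}$: since $bG$ is $\mathcal{C}^2$-smooth, the signed distance function $\rho_G$ to $bG$ satisfies $i\partial\bar\partial\rho_G\geq -C_0\,\mathrm{Id}$ near $w_0$, so $\tilde{\rho}_G(w):=\rho_G(w)+C_0|w-w_0|^2$ is plurisubharmonic on some neighborhood $V$ of $w_0$. On the $D$-side, using the $u$-property I would construct a local plurisubharmonic peak function $\psi$ at $z_0$ of polynomial rate, i.e.\ $\psi\in\mathrm{PSH}(D\cap U)$ with $\psi(z_0)=0$, $\psi<0$ elsewhere in $D\cap U$, and $-\psi(z)\leq c_1\delta_D(z)^\eta$: this is done by a Diederich--Fornaess-type expression $\psi=-(-r)^\eta e^{-M\phi_{\eta_0}}$ for a single member $\phi_{\eta_0}$ of the family and $M$ sufficiently large, plurisubharmonicity being valid for any $\eta\in(0,1)$ because $u(\eta_0^{-1})\geq 1$. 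Given a proper holomorphic map $f:D\to G$, the composition $\widetilde H(z):=\tilde{\rho}_G(f(z))$ is plurisubharmonic on $f^{-1}(V)\cap D$ and tends to $0$ as $f(z)\to w_0$; comparing $\widetilde H$ with a suitable multiple of $\psi$ via the maximum principle on $U\cap f^{-1}(V)\cap D$ yields $-\rho_G(f(z))\leq c_3\delta_D(z)^\eta$, i.e.\ $d_G(f(z))\leq c_2\,d_D(z)^\eta$, the desired $F_2$-estimate.

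The main obstacle will be the construction of $\psi$ with polynomial rate $\delta_D^\eta$ from the $u$-property alone, since that property supplies only Levi-form estimates and not a ready-made peak function; the Diederich--Fornaess bumping requires a careful calculation balancing the $-\eta(1-\eta)(-r)^{\eta-2}|\bar\partial r|^2$ term against the positive contribution from $M\,i\partial\bar\partial\phi_{\eta_0}$, and this is precisely where the restriction $0<\eta<1$ enters. Granted this peak function, the maximum-principle comparison is routine, provided one shrinks $U$ so that the quadratic term $|f(z)-w_0|^2$ is absorbed along the parts of $\partial(f^{-1}(V)\cap D\cap U)$ transverse to $bD$.
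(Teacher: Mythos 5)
The first half of your argument is fine in substance: the $g$-admissibility claim is exactly \cite[Theorem 1.2]{Khanh13} (the paper simply cites it), and your dyadic patching sketch is the idea behind that estimate. The gap is in the second half, at the decisive step. You pull $\tilde\rho_G$ back through $f$ to get the negative plurisubharmonic function $\widetilde H=\tilde\rho_G\circ f$ on part of $D$ and then assert that ``comparing $\widetilde H$ with a suitable multiple of $\psi$ via the maximum principle'' yields $-\rho_G(f(z))\leq c_3\,\delta_D(z)^\eta$. That inequality is a \emph{lower} bound for the plurisubharmonic function $\widetilde H$ (namely $\widetilde H\geq C\psi$), and no maximum-principle comparison can produce it: $C\psi-\widetilde H$ is a difference of plurisubharmonic functions, and the maximum principle only bounds plurisubharmonic functions from above by boundary data; nothing in your set-up prevents $\widetilde H$ from being very negative (i.e.\ $f(z)$ sitting deep inside $G$) at points $z$ arbitrarily close to $bD$. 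Tellingly, your sketch never uses the properness of $f$, whereas properness is exactly what the needed direction of the estimate requires; the pullback-plus-Hopf mechanism you are gesturing at is the one that classically gives the \emph{reverse} inequality $\delta_G(f(z))\gtrsim\delta_D(z)$, not the upper bound $d_G(f(z))\leq F_2(d_D(z))$ demanded by the property $(D,G)^{F_2}_{(z_0,w_0)}$.

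The paper's proof runs in the transposed direction. From the $u$-property it extracts, via \cite[Theorem 3.1]{Khanh13} and the proof of \cite[Theorem 2.1]{Khanh13}, a \emph{family} of plurisubharmonic functions $\psi_w$, indexed by boundary points $w$ near $z_0$, with the two-sided control $\psi_w(z)\leq -F_1(|z-w|)$ and $\psi_{\pi(z)}(z)\geq -F_2(\delta_D(z))$; then (Lemma \ref{lemma12} in the Appendix, following \cite{Suk}) it pushes $\psi_{\pi(p)}$ \emph{forward} through the proper map by setting $\varphi_p(w)=\sup\{\psi_{\pi(p)}(z)\colon z\in f^{-1}(w)\}$, which is a negative plurisubharmonic function on $G$ precisely because $f$ is proper, and which is uniformly bounded away from $0$ on a fixed compact subset of $G$ thanks to the bound $\psi_w(z)\leq -F_1(|z-w|)$; finally the Hopf lemma is applied on the $\mathcal{C}^2$-smooth domain $G$ to get $\delta_G(f(p))\lesssim|\varphi_p(f(p))|\leq|\psi_{\pi(p)}(p)|\leq F_2(\delta_D(p))$. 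A single peak function at $z_0$ with rate $\delta_D^\eta$, even granting your Diederich--Fornaess construction, cannot substitute for this family-plus-pushforward argument, and the quadratic-term absorption you mention does not touch the real difficulty.
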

\begin{proof} Let $D \subset \mathbb C^n$ be a $\mathcal{C}^2$-smooth domain. Assume that $D$ is pseudoconvex near $z_0\in bD$ and $D$ has the $u$-property at $z_0$, where  $u: [1,+\infty)\to [1,+\infty)$ is a smooth monotonic increasing function with $u(t)/t^{1/2}$ decreasing and $\int_{t_0}^{+\infty} \frac{da}{au(a)}<+\infty$ for some $t_0>1$. It follows from \cite[Theorem 1.2]{Khanh13} that $D$ is  $g$-admissible at $z_0$, where $g$ is a function defined by
$$
(g(t))^{-1}=\int_t^{+\infty} \frac{da}{au(a)},~ t_0\leq t<+\infty.
$$
Denote by $\tilde g$ the functions defined by
\begin{equation*}
\begin{split}
\tilde g(\delta)=\frac{1}{g^{-1}(1/(\gamma \delta))},
\end{split}
\end{equation*}
for any $0<\delta<\delta_0$, where $\gamma, \delta_0$ sufficiently small. By \cite[Theorem 3.1]{Khanh13} and the proof of \cite[Theorem 2.1]{Khanh13}, there exists a family $\psi_w(z)$ as in Lemma \ref{lemma12} in Appendix, where $F_1:=c_1 \tilde{g}^\eta$ and $F_2(t):=c_2 t^\eta,~t>0$, for some $0<\eta<1$ and $c_1,c_2>0$. Therefore, it follows from Lemma \ref{lemma12} in Appendix that the property $(D,G)_{(z_0,w_0)}^{F_2}$ holds for any $\mathcal{C}^2$-smooth domain $G\subset \mathbb C^n$ and $w_0\in bG$. This finishes the proof.
\end{proof}
By the definition of strong $u$-property, lemmas \ref{lem3} and \ref{lem4}, we obtain the following corollary.
\begin{corollary}\label{Co2}
Suppose that $D$ and $G$ are $\mathcal{C}^\infty$-smooth domains in $\mathbb  C^n$ satisfying the local condition R at $z_0\in bD$ and $w_0\in bG$ respectively. Suppose that $D$ and $G$ are pseudoconex near $z_0$ and $w_0$ respectively. Assume that $D$ (resp. $G$) has the strong $u$-property at $z_0$ (resp. strong $\tilde u$-property at $w_0$).
Let $f$ be a biholomorphic mapping of $D$ onto $G$ such that $w_0\in \mathcal{C}(f,z_0)$. Then $f$ and $f^{-1}$ extend smoothly to $bD$ in some neighborhoods of the points $z_0$ and $w_0$, respectively.
\end{corollary}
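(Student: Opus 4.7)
The strategy is to apply Lemma \ref{lem3} twice, once to $f:D\to G$ at the boundary pair $(z_0,w_0)$ and once to $f^{-1}:G\to D$ at $(w_0,z_0)$, with Lemma \ref{lem4} supplying the missing admissibility and boundary-distance-comparison hypotheses in each direction.

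First I would unpack the strong $u$-property of $D$ at $z_0$. Clause (i) of that definition produces the function $g$ with $(g(t))^{-1}=\int_t^{+\infty}\frac{da}{au(a)}$, and clause (ii) produces an exponent $0<\eta<1$ for which $\int_0^d\frac{d\delta}{\delta g(1/\delta^\eta)}<+\infty$. Plugging these into Lemma \ref{lem4} applied to $D$, I would conclude that $D$ is $g$-admissible at $z_0$ and that the comparison $(D,G)^{F_2}_{(z_0,w_0)}$ holds with $F_2(t)=c_2 t^\eta$. Running the symmetric unpacking on $G$ with its strong $\tilde u$-property produces a function $\tilde g$ so that $G$ is $\tilde g$-admissible at $w_0$, together with the reverse comparison $(G,D)^{\tilde F_2}_{(w_0,z_0)}$, where $\tilde F_2(t)=\tilde c_2 t^{\tilde\eta}$.

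Next I would apply Lemma \ref{lem3} to $f:D\to G$. The smoothness of the boundaries, local condition $\mathrm R$, and pseudoconvexity near the base points are hypotheses of the corollary; the target $G$ is $\tilde g$-admissible at $w_0$ by the previous step; the comparison $(D,G)^{F_2}$ is in place; and the integrability hypothesis $\int_0^d\frac{d\delta}{\delta\tilde g(1/F_2(\delta))}<+\infty$, after substituting $F_2(\delta)=c_2\delta^\eta$, becomes precisely clause (ii) of the strong $\tilde u$-property (up to the harmless multiplicative constant $c_2^{-1}$ inside the monotone function $\tilde g$). Lemma \ref{lem3} would therefore deliver a smooth extension of $f$ to $bD$ in some neighborhood of $z_0$. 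For $f^{-1}$ the cluster hypothesis $z_0\in\mathcal{C}(f^{-1},w_0)$ is automatic, since any $\zeta_j\to z_0$ in $D$ with $f(\zeta_j)\to w_0$ satisfies $f^{-1}(f(\zeta_j))=\zeta_j\to z_0$; rerunning Lemma \ref{lem3} with the roles of the two domains, the two maps, and the two strong properties exchanged then yields the smooth extension of $f^{-1}$ near $w_0$.

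The main technical obstacle I anticipate is making the exponents $\eta$ and $\tilde\eta$ compatible so that the mixed integrand $\frac{1}{\delta\tilde g(c_2^{-1}\delta^{-\eta})}$ appearing in the first application of Lemma \ref{lem3} is indeed integrable, and analogously for the second application. This is exactly where the quantitative content of the \emph{strong} strengthening of the $u$-property is used, and it should be resolved by choosing, in each application of Lemma \ref{lem4}, the free exponent so that it lies in the range of convergence guaranteed by the opposite domain's strong property.
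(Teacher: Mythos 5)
Your proposal is correct and follows essentially the same route as the paper, whose own justification is only the one-line remark that the corollary follows from the definition of the strong $u$-property together with Lemmas \ref{lem3} and \ref{lem4}; your double application of Lemma \ref{lem3} (to $f$ and to $f^{-1}$, using Lemma \ref{lem4} on each domain for admissibility and the $F$-comparison, and clause (ii) of the strong property for the integrability of $\frac{1}{\delta g(1/F(\delta))}$) is exactly the intended argument. The exponent-matching issue you flag is handled by taking the free exponent $\eta$ in Lemma \ref{lem4} equal to the one appearing in the opposite domain's strong property (the constant $c_2$ being absorbed by the rescaling $\delta\mapsto c_2^{1/\eta}\delta$), and the observation that $z_0\in\mathcal{C}(f^{-1},w_0)$ is automatic is also correct.
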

\begin{remark}\label{Re5} Suppose that $D$ is $\mathcal{C}^\infty$-smooth pseudoconvex of finite type near $z_0\in bD$. Then  $D$ has the $t^\epsilon$-property at $z_0$ for some $\epsilon>0$ (cf. \cite{Cho92, Khanh13}). Moreover, a computation shows that the strong $t^\epsilon$-property at $z_0$. In addition, $D$ satisfies the local condition R at $z_0$ (cf. \cite{BL80}).
\end{remark}

By Corollary \ref{Co2} and Remark \ref{Re5}, we obtain the following corollary which is proved by A. Sukhov. 
\begin{corollary}[See Corollary 1.4 in \cite{Suk}]\label{Cor2}
Suppose that $D$ and $G$ are $\mathcal{C}^\infty$-smooth domains in $\mathbb  C^n$. Suppose that $D$ and $G$ are pseudoconex of finite type near $z_0\in bD$ and $w_0\in bG$ respectively. Let $f$ be a biholomorphic mapping of $D$ onto $G$ such that $w_0\in \mathcal{C}(f,z_0)$. Then $f$ and $f^{-1}$ extend smoothly to $bD$ in some neighborhoods of the points $z_0$ and $w_0$, respectively.
\end{corollary}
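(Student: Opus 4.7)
The plan is to deduce this corollary directly from Corollary \ref{Co2} by verifying its hypotheses using Remark \ref{Re5}. Since both $D$ and $G$ are $\mathcal{C}^\infty$-smooth pseudoconvex of finite type near $z_0$ and $w_0$ respectively, Remark \ref{Re5} tells us that $D$ has the strong $t^\epsilon$-property at $z_0$ for some $\epsilon>0$, and similarly $G$ has the strong $t^{\tilde\epsilon}$-property at $w_0$ for some $\tilde\epsilon>0$. Moreover, the same remark guarantees that $D$ and $G$ satisfy the local condition $\mathrm{R}$ at $z_0$ and $w_0$, respectively (citing \cite{BL80}). So the pseudoconvexity, smoothness, local condition $\mathrm{R}$, and strong $u$-property / strong $\tilde u$-property hypotheses required by Corollary \ref{Co2} are all in place, with the specific choice $u(t)=t^\epsilon$ and $\tilde u(t)=t^{\tilde\epsilon}$.

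With these verifications done, I would simply invoke Corollary \ref{Co2}. That corollary, given $w_0\in\mathcal{C}(f,z_0)$ and the hypotheses just checked, asserts that both $f$ and $f^{-1}$ extend smoothly to $bD$ (respectively $bG$) in neighborhoods of $z_0$ and $w_0$. This is exactly the conclusion we want.

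The only step that requires any real care is checking that \emph{finite type} indeed furnishes the \emph{strong} $t^\epsilon$-property (not just the $t^\epsilon$-property), i.e.\ that the integrability conditions defining strong $u$-property are satisfied by $u(t)=t^\epsilon$. This is the computation already asserted in Remark \ref{Re5}: one checks that $\int_t^{+\infty}\frac{da}{a\, a^\epsilon}<+\infty$ gives $(g(t))^{-1}\approx t^{-\epsilon}$, so $g(t)\approx t^{\epsilon}$, and then the resulting function $\frac{1}{\delta\, g(1/\delta^\eta)}\approx \delta^{\epsilon\eta-1}$ is decreasing and integrable near $0$ as soon as one picks any $0<\eta<1$. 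Thus no real obstacle appears — the work was done in proving Corollary \ref{Co2} and establishing the finite-type estimates; the present statement is essentially a specialization.
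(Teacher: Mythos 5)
Your proposal is correct and follows exactly the paper's route: the paper derives this corollary by combining Corollary \ref{Co2} with Remark \ref{Re5} (finite type gives the strong $t^\epsilon$-property and local condition R), which is precisely what you do, including the small computation verifying the strong property for $u(t)=t^\epsilon$.
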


It is well-known that any accumulation orbit boundary point is pseudoconvex (cf. \cite{GK91}). The following lemma says that the pseudoconvexity is invariant under any biholomorphism.
\begin{lemma}\label{le3}
 Let $D,G$ be $\mathcal{C}^2$-smooth domains in $\mathbb C^n$ and let $z_0\in bD$ and $w_0\in bG$. Let $f:D\to G$ be a biholomorphism such that $w_0\in \mathcal{C}(f,z_0)$. If $D$ is pseudoconvex at $z_0$, then $G$ is also pseudoconvex at $w_0$.
\end{lemma}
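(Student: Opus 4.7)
The plan is to argue by contradiction: I assume that $G$ is not pseudoconvex at $w_0$, and I construct a pseudoconvex open set that agrees with $G$ near $w_0$, yielding a contradiction. The key tool is the local Kontinuit\"atssatz at a non-pseudoconvex $\mathcal{C}^2$-smooth boundary point: there exist neighborhoods $W_1 \Subset W_2$ of $w_0$ such that every holomorphic function on $G \cap W_2$ extends holomorphically to $W_1$. Applied componentwise to $f^{-1} \colon G \to D$, this produces a holomorphic map $F \colon W_1 \to \mathbb{C}^n$ with $F = f^{-1}$ on $W_1 \cap G$.

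Next, I use the hypothesis $w_0 \in \mathcal{C}(f, z_0)$ to locate $F(w_0)$: taking a sequence $z_k \to z_0$ in $D$ with $f(z_k) \to w_0$, one has $f(z_k) \in W_1 \cap G$ for large $k$ and $F(f(z_k)) = f^{-1}(f(z_k)) = z_k$, so continuity of $F$ gives $F(w_0) = z_0$. After shrinking $W_1$, I may therefore assume $F(W_1) \subset V_0$ where $V_0$ is a neighborhood of $z_0$ chosen so that $D \cap V_0$ is pseudoconvex (which exists because of the pseudoconvexity of $D$ at $z_0$).

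The heart of the proof is to realize $W_1 \cap G$ as a pseudoconvex open set. Set $A := F^{-1}(D) \cap W_1 = F^{-1}(D \cap V_0) \cap W_1$; then $A$ is open, contains $W_1 \cap G$, and is pseudoconvex as the preimage of a pseudoconvex open set under a holomorphic map from the pseudoconvex ambient $W_1$. I then show that $W_1 \cap G$ is a connected component of $A$: it is clearly open in $A$, and to see it is closed in $A$, I assume $w_n \in W_1 \cap G$ converges to some $w \in A \setminus (W_1 \cap G) \subset W_1 \cap \partial G$, and rule out $F(w) \in D$ by analytic continuation. Were $F(w) \in D$, then on a connected neighborhood $N$ of $w$ with $F(N) \subset D$, the holomorphic map $g := f \circ F$ would be the identity on the non-empty open subset $N \cap G$ (since $F = f^{-1}$ there), hence the identity on all of $N$ by the identity principle; picking $\zeta \in N \setminus G$ would give $\zeta = g(\zeta) = f(F(\zeta)) \in G$, a contradiction. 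Thus $W_1 \cap G$ is a connected component of $A$, so it is pseudoconvex; but $\partial(W_1 \cap G)$ coincides with $\partial G$ near $w_0$, so $G$ is pseudoconvex at $w_0$, contradicting the initial assumption.

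The main obstacle I anticipate is the interplay between the local Hartogs extension and the analytic-continuation argument in the third step: the extension $F$ on its own only guarantees $W_1 \cap G \subset A$, and without ruling out that $F$ sends part of $W_1 \cap \partial G$ back into $D$, one cannot conclude that $W_1 \cap G$ is a connected component of $A$ and inherit its pseudoconvexity. Once this is handled, pulling back a local pseudoconvex neighborhood of $z_0$ in $D$ to produce pseudoconvexity of $G$ near $w_0$ is straightforward.
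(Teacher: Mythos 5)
Your proposal has a genuine gap at its very first step, the ``local Kontinuit\"atssatz''. In this lemma ``pseudoconvex at $w_0$'' must mean that $G\cap W$ is pseudoconvex for some neighborhood $W$ of $w_0$ (this is the form in which the lemma is invoked later, to feed Corollary \ref{Co2}), so its negation only says that boundary points at which the Levi form has a negative eigenvalue accumulate at $w_0$; it does \emph{not} say that the Levi form at $w_0$ itself has a negative eigenvalue. Forced Hartogs--Levi extension of all functions on $G\cap W_2$ to a fixed neighborhood $W_1$ of $w_0$ is available only in the latter situation; in the former, functions extend across the nearby Levi-negative points into neighborhoods that may shrink as those points approach $w_0$, and no neighborhood of $w_0$ itself need be gained. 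Concretely, let $P_0(z_2)=|z_2|^4\bigl(2+\cos(10\log|z_2|)\bigr)$ for $z_2\ne 0$ and $P_0(0)=0$; then $P_0$ is $\mathcal{C}^2$, positive off $0$, and $\Delta P_0<0$ on annuli shrinking to $0$, so $G=\{(z_1,z_2)\colon \mathrm{Re}\,z_1+P_0(z_2)<0\}$ has $\mathcal{C}^2$ boundary and is not pseudoconvex at the origin in the sense above; yet $\mathrm{Re}\,z_1<0$ on $G$, so $1/z_1$ is holomorphic on $G\cap W_2$ for every $W_2$ and admits no holomorphic extension to any neighborhood of $(0,0)$ agreeing with it on the part of $G$ clustering at the origin. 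Hence your key tool is false as stated, and your argument only treats the case in which the Levi form of $bG$ at $w_0$ itself has a negative eigenvalue; the delicate case (Levi form degenerate at $w_0$, negative eigenvalues only at nearby boundary points) is untouched, and it cannot be handled by simply running your argument at those nearby points, since the cluster-set hypothesis, and hence the localization $F(w_0)=z_0$ and the choice of the pseudoconvex piece $D\cap V_0$, are available only at $w_0$.

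Apart from this, the remaining steps (identifying $F(w_0)=z_0$ through the cluster set, pulling back $D\cap V_0$, and the clopen/identity-principle argument showing that $W_1\cap G$ is a union of components of $F^{-1}(D\cap V_0)\cap W_1$, hence pseudoconvex) are sound, and they follow a mechanism different from the paper's: the paper does not extend $f^{-1}$ at all, but transports holomorphic hulls, taking a compact $K\Subset G$ whose $\mathcal{O}(G)$-hull contains $V\cap G$, setting $L=f^{-1}(K)$, and concluding that the points $z_j\to z_0$ lie in $\hat L$, contradicting pseudoconvexity of $D$ at $z_0$. Note, however, that the paper's opening step (producing such a $K$ from non-pseudoconvexity at $w_0$) hinges on exactly the same dichotomy you elide, so repairing your first step would require supplying an argument that the paper itself leaves implicit rather than quoting a ready-made extension theorem.
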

\begin{proof}
Since $w_0\in \mathcal{C}(f,z_0)$, we may assume that there exists a sequence $\{z_j\}\subset D$ such that $z_j\to z_0$ and $f(z_j)\to w_0$ as $j\to \infty$. Assume the contrary, that $G$ is not pseudoconvex at $w_0$. Then there is a compact set $K\Subset G$ such that the holomorpphic hull $\hat{K}$ of $K$ contains $V\cap G$, where $V$ is a small neighborhood of $w_0$. (Recall that $\hat{K}:=\{z\in G\colon |g(z)|\leq \max_{K}|g|,~\forall~g:G\to \mathbb C~\text{holomorphic} \}$.)  Consequently, $f(z_j)\in\hat{K}$ for every $j\geq j_0$, where $j_0$ is big enough.

Denote by $L:=f^{-1}(K)$. Then $L$ is a compact subset in $D$. We shall prove that $z_j\in \hat{L}$ for every $j\geq j_0$ and hence the proof follows. Indeed,
let $g:D\to \mathbb C$ be any holomorphic function. Then since $f(z_j)\in \hat{K}$ for every $j\geq j_0$, we have
$$
|g\circ f^{-1}(f(z_j))|\leq \max_{K} |g\circ f^{-1}|,~\forall~j\geq j_0.
$$
This implies that
$$
|g(z_j)|\leq \max_{K} |g\circ f^{-1}|=\max_{f^{-1}(K)}|g|=\max_{L}|g|,~\forall~j\geq j_0.
$$
Therefore, $z_j\in \hat{L}$ for every $j\geq j_0$, and thus the proof is complete.
\end{proof}
\begin{lemma}\label{lem6} Let $\Omega_P$ be as in Theorem \ref{Th2} and let $f\in \mathrm{Aut}(\Omega_P)$ be arbitrary. Then there exist $t_1,t_2\in\mathbb R$ such that $f$ and $f^{-1}$ extend to be locally $\mathcal{C}^\infty$-smooth up to the boundaries near $(it_1,0)$ and $(it_2,0)$, respectively, and $f(it_1,0)=(it_2,0)$. 
\end{lemma}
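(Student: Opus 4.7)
The plan is to produce a single infinite-type boundary point $p_1 = (it_1, 0)$ such that the cluster set of $f$ at $p_1$ contains another boundary point which is again of infinite type, and then to invoke Corollary~\ref{Co2} to obtain the simultaneous smooth extensions of $f$ and $f^{-1}$ announced in the lemma.

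I would start by taking the concrete interior sequence $\zeta_n = (-1/n, 0) \in \Omega_P$, so that $\zeta_n \to (0,0) = (it_1, 0)$ with $t_1 = 0$, an infinite-type boundary point. After passing to a subsequence, $w_n := f(\zeta_n) \to w^*$ in $\overline{\Omega_P} \cup \{\infty\}$. Openness of $f$ excludes $w^* \in \Omega_P$. To exclude $w^* = \infty$ I would use the condition (T) at $\infty$: in sub-case (i), the condition $P(z_2) \to +\infty$ forces any escape of $w_n$ to $\infty$ to drive $\mathrm{Re}\,(w_n)_1 \to -\infty$, a situation ruled out by a Cayley-type transform in the $z_1$ variable that reduces matters to the bounded case; in sub-case (ii), I would change chart via $\zeta = 1/z_2$ to transform $\Omega_P$ near infinity into $\Omega_Q$, converting $w^* = \infty$ into a finite cluster value at $(-r, 0) \in b\Omega_Q$, and then applying the extension results of Section~3 to the resulting biholomorphism $\Omega_P \to \Omega_Q$ would produce a CR equivalence of germs at $(0,0) \in b\Omega_P$ and $(-r,0) \in b\Omega_Q$, contradicting the non-isomorphism assumption of (T)(ii). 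Hence $w^* \in b\Omega_P$.

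Next I would argue that $w^* = (it_2, 0)$ for some $t_2 \in \mathbb R$. Suppose for contradiction that $w^*$ is a finite-type boundary point. By hypothesis (iii) of Theorem~\ref{Th2}, $P$ vanishes to finite order at the $z_2$-coordinate of $w^*$, so $b\Omega_P$ is of finite type near $w^*$; Lemma~\ref{le3} upgrades pseudoconvexity at $(0,0)$ (which follows from the strong $u$-property) to pseudoconvexity at $w^*$. Corollary~\ref{Cor2} (Sukhov) then extends $f$ smoothly across $b\Omega_P$ near $(0,0)$, producing a CR diffeomorphism germ $(b\Omega_P, (0,0)) \to (b\Omega_P, w^*)$, which would force equal D'Angelo type at the two points---a contradiction, since $(0,0)$ is of infinite type while $w^*$ was assumed to be of finite type. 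Therefore $w^* = (it_2, 0)$.

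To finish, by Theorem~\ref{Th2}'s hypothesis $\Omega_P$ has the strong $u$-property at $(0,0)$ and, by Remark~\ref{remark}, also at $(it_2, 0)$. Applying Corollary~\ref{Co2} with $D = G = \Omega_P$, $z_0 = (it_1, 0) = (0,0)$, and $w_0 = (it_2, 0)$ yields simultaneous smooth extensions of $f$ and $f^{-1}$ across the boundary near $(it_1, 0)$ and $(it_2, 0)$, together with $f(it_1, 0) = (it_2, 0)$ as required. The main obstacle lies in the exclusion $w^* \ne \infty$: the two sub-cases of condition (T) demand genuinely different handling, and sub-case (ii) requires some care in transferring the extension machinery of Section~3 to the transformed domain $\Omega_Q$ before the desired CR-nonequivalence contradiction can be extracted.
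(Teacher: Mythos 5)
Your second half is essentially the paper's: once a finite cluster value $w^*\in b\Omega_P$ is produced, use Lemma \ref{le3}, the smooth-extension machinery and invariance of D'Angelo type under a CR diffeomorphism to force $w^*=(it_2,0)$, and then Corollary \ref{Co2} at the two infinite type points gives the extensions of $f$ and $f^{-1}$. The genuine gap is in the first half: you fix $t_1=0$ and claim to exclude the cluster value $\infty$ at that particular point, and your exclusion does not work. In case (T)(i), the hypothesis $P\to+\infty$ only rules out escape of the second coordinate while the first coordinate stays bounded; $w_n\to\infty$ can equally occur with $(w_n)_2$ bounded and $(w_n)_1\to\infty$ (for instance with $\mathrm{Re}\,(w_n)_1$ bounded and $\mathrm{Im}\,(w_n)_1\to\pm\infty$), and condition (T) says nothing about this. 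The proposed ``Cayley-type transform in $z_1$ reducing to the bounded case'' is not an argument: the image of $\Omega_P$ under $z_1\mapsto (z_1+1)/(z_1-1)$ is still unbounded in $z_2$, and even after compactifying, the point at infinity merely becomes another boundary point of the transformed domain which you would still have to exclude as a cluster value of $f$ at the \emph{fixed} point $(0,0)$ -- nothing in your setup does that, and indeed in the one-variable model an automorphism of a half-plane can send a prescribed boundary point exactly to $\infty$. Similarly, in case (T)(ii) the substitution $\zeta=1/z_2$ only helps when the escape is purely in the $z_2$-direction with the first coordinate already convergent.

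The paper's key idea, absent from your proposal, is precisely that $t_1$ cannot be prescribed in advance. One considers the bounded holomorphic function $\phi\circ f$, $\phi(z_1,z_2)=(z_1+1)/(z_1-1)$, restricted to the slice $\{\mathrm{Re}\,z_1<0,\ z_2=0\}$: if $\pi_1\circ f(x+it_1,0)\to\infty$ as $x\to 0^-$ for \emph{every} $t_1$, this bounded function would have boundary value $1$ along the whole line and hence be constant, contradicting $|\phi|<1$ on $\Omega_P$. This yields \emph{some} $t_1$ (not necessarily $0$) and a sequence $x_k\to 0^-$ along which $\pi_1\circ f(x_k+it_1,0)$ converges; only then is condition (T) invoked to control the second coordinate, in (T)(i) because $P(\pi_2\circ f)$ is bounded by $-\mathrm{Re}\,\pi_1\circ f$, and in (T)(ii) via the chart $w_2\mapsto 1/w_2$, Corollary \ref{Co2} applied to $\Omega_Q$, and the assumed CR non-equivalence of the germs. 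A secondary slip: when you assume $w^*$ is of finite type you invoke Corollary \ref{Cor2}, which requires finite type at \emph{both} boundary points, whereas $(it_1,0)$ is of infinite type; the correct tool there is Corollary \ref{Co2} combined with Remark \ref{Re5} (finite type points enjoy the strong $t^\epsilon$-property and condition R), as in the paper.
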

\begin{proof}
We shall follow the proof of \cite[Lemma $3.2$]{Ber94}. Let $\phi: \Omega_P \to \Delta $ be the function defined by 
$$
\phi(z_1,z_2)=\frac{z_1+1}{z_1-1}.
$$
Then we see that $\phi$ is continuous on $\overline{\Omega_P}$ such that $|\phi(z)|<1$ for $z=(z_1,z_2)\in \Omega_P$ and tends to $1$ when $z_1 \to \infty$. Let $f: \Omega_P\to \Omega_P$ be an automorphism. We claim that there exists $t_1\in \mathbb R$ such that $\lim_{x\to 0^-} \inf |\pi_1 \circ f (x+it_1,0)|<+\infty$. Here, $\pi_1,\pi_2$ are the projections of $\mathbb C^2$ onto $\mathbb C_{z_1}$ and $\mathbb C_{z_2}$, respectively, i.e. $\pi_1(z)=z_1$ and $\pi_2(z)=z_2$. Indeed, if this would not be the case, the function $\phi\circ f$ would be equal to $1$ on the half plane $\{(z_1,z_2)\in \mathbb C^2\colon\mathrm{Re}~z_1<0, z_2=0\}$ and this is impossible since $|\phi(z)|<1$ for every $z\in \Omega_P$. Therefore, we may assume that there exists a sequence $x_k<0$ such that $\lim_{k\to \infty} x_k=0$ and $\lim_{k\to \infty } \pi_1\circ f (x_k+it_1,0)=w_1^0\in \overline{\mathcal{H}}$.

We shall prove that, after taking some subsequence if necessary, $\lim_{k\to \infty}\pi_2\circ f (x_k+it_1,0)=w_2^0$ for some $w_2^0\in \mathbb C$. Indeed, arguing by contradiction we assume that  $\pi_2\circ f (x_k+it_1,0)\to \infty$ as $k\to \infty$. Because of the convergence of $\{\pi_1\circ f (x_k+it_1,0)\}$, the sequence $\{P(\pi_2\circ f (x_k+it_1,0))\}$ is bounded, which is a contradition if $\lim_{z_2\to \infty}P(z_2)=+\infty$. Therefore, after taking some subsequence if necessary, we may assume that 
$$
 \lim_{k\to \infty} P(\pi_2\circ f (x_k+it_1,0))=r\geq 0.  
$$

Define $\psi(w_1,w_2)=(w_1,1/w_2)$. Then the map $\psi\circ f$ is well-defined near $(it_1,0)$ and 
$$
 \lim_{k\to \infty } \psi\circ f (x_k+it_1,0)=(w_1^0,0).
$$
Moreover, the defining function for $\psi\circ f(\Omega_P\cap U)$ near $(w_1^0,0)$, where $U$ is a small neighborhood of $(it_1,0)$, is
$$
\mathrm{Re}~w_1+Q(w_2)<0,
$$
where
\[
Q(w_2)=
\begin{cases}
P(1/w_2) &~\text{if}~ w_2\ne 0\\
r& \text{if}~ w_2=0.
\end{cases}
\]

Notice that $\psi\circ f$ is a local biholomorphism on $\Omega_P\cap U$. Since $\Omega_P\cap U$ is pseudoconvex near $(0,0)$, $\psi\circ f(\Omega_P\cap U)$ is pseudoconvex near $(-r,0)$. Moreover, the domain 
$$
\Omega_{Q}=\{(w_1,w_2)\in \mathbb C^2\colon \mathrm{Re}~w_1+Q(w_2)<0\}
$$
has the strong $\tilde u$-property at $(w_1^0,0)$. Therefore, it follows from Corollary \ref{Co2} that the local biholomorphisms $\psi\circ f$ and $(\psi\circ f)^{-1}$ can be extended to be $\mathcal{C}^\infty$-smooth up to the boundaries in neighborhoods of $(it_1,0)$ and $(w_1^0,0)$, respectively. However, $b\Omega_P$ and $b\Omega_Q$ are not isomorphic as CR maniflod germs at $(0,0)$ and $(-r,0)$ respectively. This is a contradiction. 

Granted the fact that $\lim_{k\to \infty}f(x_k+it_1,0)=w^0:=(w_1^0,w_2^0)\in b\Omega_P$, it follows from Lemma \ref{le3} that $\Omega_P$ is pseudoconvex near $w^0$. Moreover, again Corollary \ref{Co2} ensures that $f$ and $f^{-1}$ extend to be locally $\mathcal{C}^\infty$-smooth up to the boundaries. Hence, $\tau_{w^0}(b\Omega_P)=\tau_{(it_1,0)}(b \Omega_P)=+\infty$, which means that $w^0=(it_2,0)$ for some $t_2\in \mathbb R$ by virtue of Remark \ref{remark}. The lemma is proved.
\end{proof}

\section{Automorphism group of $\Omega_P$}
In this section, we are going to prove Theorem \ref{Th2}. To do this, let $P$ be as in Theorem \ref{Th2}. Let $p(r)$ be a $\mathcal{C}^\infty$-smooth function on $(0,\epsilon_0)~(\epsilon_0>0)$ such that the function
\[
P(z)=
\begin{cases}
e^{p(|z|)}&~\text{if}~z\in \Delta^*_{\epsilon_0} \\
0     &~\text{if}~z=0.
\end{cases}
\]
\begin{remark} \label{remark1} Since $\nu_0(P)=+\infty$, $\lim_{r\to 0^+} p(r)=-\infty$. Moreover, we observe that $\limsup_{r\to 0^+} |r p'(r)|=+\infty$, for otherwise one gets $|p(r)|\lesssim |\log(r)|$ for every $0<r<\epsilon_0$, and thus $P$ does not vanish to infinite order at $0$. In addtion, it follows from \cite[Corollary $1$]{Ninh1} that the function $P(r)p'(r)$ also vanishes to infinite order at $r=0$.
\end{remark}

 In proving Theorem \ref{Th2}, we need the following lemmas.
 \begin{lemma}[See Lemma 2 in \cite{Ninh1}]\label{lemma2} Suppose that there are $0<\alpha\leq 1$ and $\beta>0$ such that
$$
 \lim_{z\to 0}\frac{P(\alpha z)}{P(z)}=\beta.
$$ 
Then $\alpha=\beta=1$.
\end{lemma}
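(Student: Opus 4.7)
The plan is, since $P(z)=q(|z|)$ is radial and $q>0$ strictly on $(0,\epsilon_0)$, to reduce the hypothesis to the one-variable statement
\[
\lim_{r\to 0^+}\bigl[p(\alpha r)-p(r)\bigr]=\log\beta,
\]
where $p(r):=\log q(r)$ is smooth on $(0,\epsilon_0)$. If $\alpha=1$, the left-hand limit is $0$, which forces $\beta=1$, and we are done. I therefore intend to assume $0<\alpha<1$ and derive a contradiction by showing that $p$ grows at most logarithmically as $r\to 0^+$, which clashes with the infinite-order vanishing of $P$ at the origin.

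\textbf{Iteration step.} Given $\epsilon>0$, choose $\delta\in(0,\epsilon_0)$ so that $|p(\alpha r)-p(r)-\log\beta|<\epsilon$ for all $r\in(0,\delta)$. Since the interval $(0,\delta)$ is stable under $r\mapsto\alpha r$, telescoping immediately produces, for every integer $n\ge 1$ and every $r\in(0,\delta)$,
\[
\bigl|p(\alpha^n r)-p(r)-n\log\beta\bigr|<n\epsilon.
\]
For each small $s\in(0,\alpha\delta)$ I would then select the unique positive integer $n=n(s)$ with $r_s:=s/\alpha^n\in(\alpha\delta,\delta]$, so that $n\log(1/\alpha)=\log(1/s)+O(1)$ while $p(r_s)$ stays bounded by continuity of $p$ on the compact set $[\alpha\delta,\delta]$. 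Dividing the telescoping bound by $\log(1/s)$ and taking $\limsup_{s\to 0^+}$ yields
\[
\limsup_{s\to 0^+}\biggl|\frac{p(s)}{\log(1/s)}-\frac{\log\beta}{\log(1/\alpha)}\biggr|\leq\frac{\epsilon}{\log(1/\alpha)}.
\]
Letting $\epsilon\to 0^+$ shows that $L:=\lim_{s\to 0^+} p(s)/\log(1/s)=\log\beta/\log(1/\alpha)$ is a finite real number.

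\textbf{Clash with infinite-order vanishing.} Hypothesis (ii) says $P(s)/s^k\to 0$ as $s\to 0^+$ for every $k\in\mathbb N$, i.e.\ $p(s)+k\log(1/s)\to-\infty$, which forces $p(s)/\log(1/s)\to-\infty$. This contradicts the finiteness of $L$ obtained above. Hence the case $0<\alpha<1$ is impossible, so $\alpha=1$ and consequently $\beta=1$.

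\textbf{Expected obstacle.} The delicate point is making the asymptotic $p(s)\sim L\log(1/s)$ hold for \emph{every} $s\to 0^+$, not merely along geometric subsequences $\alpha^n r_0$. This will be handled by letting the base point $r_s$ roam over the fixed compact interval $[\alpha\delta,\delta]$, so that the bounded values $p(r_s)$ are absorbed into an $o(\log(1/s))$ error after dividing. Everything else is a routine telescoping argument together with the basic observation that super-polynomial decay of $P$ forces super-logarithmic divergence of $-p$.
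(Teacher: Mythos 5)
Your argument is correct. Note first that the paper itself does not prove this lemma --- it imports it from \cite{Ninh1} --- so the relevant comparison is with the iteration argument given there, which likewise exploits the radial form $P(z)=q(|z|)$, the positivity of $q$ near $0$, and the clash between geometric iteration of the ratio and the infinite-order vanishing of $P$. Your version is a clean, self-contained rendition of that idea: writing $p=\log q$, telescoping the hypothesis $p(\alpha r)-p(r)\to\log\beta$, and normalizing each small $s$ to a base point in the fixed annulus $[\alpha\delta,\delta]$ gives the full asymptotic $p(s)/\log(1/s)\to\log\beta/\log(1/\alpha)$, which indeed contradicts $p(s)/\log(1/s)\to-\infty$ forced by infinite-order vanishing. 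This is actually a bit more than is needed: a contradiction already follows along a single geometric sequence $s_n=\alpha^n r_0$, since there $P(s_n)\gtrsim s_n^{K}$ with $K=\log\beta/\log\alpha$, against $P(s_n)=o(s_n^{k})$ for $k>K$; your uniform-in-$s$ statement costs the extra bookkeeping with $r_s$ but is harmless. Two trivial points to tidy: take the base interval half-open (or shrink $\delta$ slightly) so the hypothesis bound is available at the base point $r_s$, and note explicitly that $q(r)>0$ on $(0,\epsilon_0)$ (from $q(0)=0$ and strict monotonicity) so that $p$ is defined and continuous there; convexity of $q$ is never needed in your route, which makes it marginally more general than the convexity-flavored estimates one might otherwise invoke.
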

\begin{lemma}[See Lemma 3 in \cite{Ninh1}]\label{lemma3} Let $\beta\in \mathcal{C}^\infty(\Delta_{\epsilon_0})$ with $\beta(0)=0$. Then  
$$
P(z+z\beta(z))-P(z)=P(z)\Big(|z|p'(|z|)\big(\mathrm{Re}(\beta(z)+o(\beta(z))\big)\Big)+o((\beta(z))^2)
$$
for any $z\in  \Delta^*_{\epsilon_0}$ satisfying $z+z\beta(z)\in \Delta_{\epsilon_0}$.
\end{lemma}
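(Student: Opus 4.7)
The plan is to perform a first-order Taylor expansion of the radial profile $\tilde{P}(r) := e^{p(r)}$ (so that $P(z) = \tilde{P}(|z|)$) around $r = |z|$, combined with an expansion of $|z+z\beta(z)|-|z|$ as $\beta(z)\to 0$. Setting $w = z(1+\beta(z))$, I first compute $|w|^{2}-|z|^{2} = |z|^{2}(2\mathrm{Re}\,\beta(z) + |\beta(z)|^{2})$, and then apply the identity $|w|-|z| = (|w|^{2}-|z|^{2})/(|w|+|z|)$ together with the expansion $(|w|+|z|)^{-1} = (2|z|)^{-1}(1+o(1))$ to obtain
\begin{equation*}
|w|-|z| = |z|\,\mathrm{Re}(\beta(z)) + o(|z|\,\beta(z)).
\end{equation*}

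Next, I would apply the first-order Taylor formula with Lagrange remainder to $\tilde{P}$: for some $\xi$ between $|z|$ and $|w|$,
\begin{equation*}
\tilde{P}(|w|) - \tilde{P}(|z|) = \tilde{P}'(|z|)(|w|-|z|) + \tfrac{1}{2}\tilde{P}''(\xi)(|w|-|z|)^{2}.
\end{equation*}
Since $\tilde{P}'(r) = \tilde{P}(r)p'(r) = P(r)p'(r)$, substituting the expansion from the previous step into the linear term produces
\begin{equation*}
\tilde{P}'(|z|)(|w|-|z|) = P(z)|z|p'(|z|)\bigl(\mathrm{Re}(\beta(z)) + o(\beta(z))\bigr),
\end{equation*}
which is exactly the first term in the target formula.

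To bound the Lagrange remainder, I would write $\tilde{P}''(\xi) = P(\xi)\bigl(p''(\xi) + p'(\xi)^{2}\bigr) = P''(\xi)$ (where $P$ is identified with its radial profile) and use $(|w|-|z|)^{2} = O(|z|^{2}\beta(z)^{2})$, which reduces the problem to verifying $|z|^{2}P''(\xi) = o(1)$ as $z\to 0$. Since $P$ vanishes to infinite order at the origin (cf. Remark~\ref{remark1}), so does $P''$, and the desired estimate $\tfrac{1}{2}\tilde{P}''(\xi)(|w|-|z|)^{2} = o(\beta(z)^{2})$ follows. Combining the two pieces yields the claimed identity.

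The delicate step is the last one: balancing the exponential decay of $P$ against the potentially unbounded growth of $p'(r)$ and $p''(r)$ as $r\to 0^{+}$. The saving observation is that the second derivative entering the remainder naturally regroups into $P''(\xi)$, so one does not need a separate estimate for $p''$ alone, and the infinite-order flatness of $P$ (which automatically transfers to all of its derivatives) overwhelms any polynomial blow-up of the derivatives of $p$.
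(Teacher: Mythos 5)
Your argument is correct; note, though, that the paper offers no proof of this statement to compare against --- it is quoted directly from Lemma 3 of \cite{Ninh1}. As a self-contained verification your route is the natural one and it works: the expansion $|z(1+\beta(z))|-|z|=|z|\,\mathrm{Re}\,\beta(z)+O(|z|\beta(z)^2)$ is elementary; the identity $q'(r)=q(r)p'(r)$ for the radial profile $q=e^{p}$ gives exactly the main term $P(z)|z|p'(|z|)\big(\mathrm{Re}\,\beta(z)+o(\beta(z))\big)$; and you handle the only delicate point --- the Lagrange remainder --- correctly by regrouping $e^{p(\xi)}\big(p''(\xi)+p'(\xi)^2\big)$ into $q''(\xi)$. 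Since $\xi$ lies between $|z|$ and $|z||1+\beta(z)|$, it tends to $0$ with $z$, and $q''$ extends continuously to $r=0$ with value $0$ because $P$ is $\mathcal{C}^\infty$ and flat at the origin; hence $\tfrac12 q''(\xi)\big(|w|-|z|\big)^2=O(|z|^2\beta(z)^2)\,o(1)=o\big(\beta(z)^2\big)$, and in fact mere boundedness of $q''$ near $0$ would already suffice, so the full infinite-order flatness is not even needed for the error term. This regrouping sidesteps any separate estimate of $p'$ or $p''$, which do blow up as $r\to 0^{+}$ (cf.\ Remark \ref{remark1}), and it is essentially the same first-order Taylor computation on the radial data that the cited source performs, so there is no methodological divergence to report --- only the useful fact that the lemma admits this short direct proof, making the appeal to \cite{Ninh1} dispensable.
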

In what follows, denote by $\mathcal{H}:=\{z_1\in \mathbb C\colon \mathrm{Re}(z_1)<0\}$ the left half-plane. 
\begin{lemma}\label{lemma7}
If $f\in \mathrm{Aut}(\Omega_P\cap U)\cap \mathcal{C}^\infty(\overline{\Omega_P}\cap U)$ satisfying $f_1(z_1,z_2)= a_{01} z_1+\tilde a_0(z_1)$ and $f_2(z_1,z_2)=b_{10}z_2+z_2\tilde b_1(z_1)$, where $a_{01},b_{10}\in \mathbb C^*$ with $b_{10}>0$ and $\tilde a_0,\tilde b_1\in \mathrm{Hol}(\mathcal{H}\cap U_1)\cap \mathcal{C}^\infty(\overline{\mathcal{H}}\cap U_1)$ with $\nu_0(\tilde a_0)\geq 2$ and $\nu_0(\tilde b_1)\geq 1$, where $U$ and $U_1$ are neighborhoods of the origins in $\mathbb C^2$ and $\mathbb C_{z_1}$, respectively, then $a_{01}=b_{10}=1$.
\end{lemma}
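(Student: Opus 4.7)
The strategy is to insert $f$ into the boundary-invariance identity $\mathrm{Re}(f_1(z_1,z_2))+P(f_2(z_1,z_2))=0$ on $b\Omega_P\cap U$, parametrize the boundary near the origin by $(it-P(z_2),z_2)$ with $t\in\mathbb R$ small and $z_2\in\Delta_{\epsilon_0}$, and separately specialize the resulting scalar identity in $z_2$ and in $t$. Writing $a_{01}=\alpha+i\gamma$ and substituting the explicit forms of $f_1,f_2$ reduces the boundary identity to
\begin{equation*}
-\alpha P(z_2)-\gamma t+\mathrm{Re}\bigl(\tilde a_0(it-P(z_2))\bigr)+P\bigl((b_{10}+\tilde b_1(it-P(z_2)))z_2\bigr)=0.
\end{equation*}

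Setting $z_2=0$ kills the $P$ terms and leaves $-\gamma t+\mathrm{Re}(\tilde a_0(it))=0$ for all small $t\in\mathbb R$. Since $\tilde a_0\in\mathrm{Hol}(\mathcal H\cap U_1)\cap\mathcal C^\infty(\overline{\mathcal H}\cap U_1)$ with $\nu_0(\tilde a_0)\ge 2$, it admits an asymptotic Taylor expansion $\tilde a_0(z_1)\sim\sum_{n\ge 2}c_n z_1^n$ at $0$. Evaluating along $z_1=it$ and matching coefficients of each power of $t$ forces $\gamma=0$ (the coefficient of $t$ is absent on the holomorphic side) together with $\mathrm{Re}(c_n i^n)=0$ for every $n\ge 2$. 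In particular $a_{01}=\alpha\in\mathbb R^*$.

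Next I set $t=0$. By $\nu_0(\tilde a_0)\ge 2$ one has $\mathrm{Re}(\tilde a_0(-P(z_2)))=O(P(z_2)^2)$. Applying Lemma \ref{lemma3} at the base point $b_{10}z_2$ with perturbation $\beta(z_2)=\tilde b_1(-P(z_2))/b_{10}=O(P(z_2))$, and using the elementary identity $P(r)p'(r)=q'(r)$, the remaining $P$-term satisfies
\begin{equation*}
P\bigl((b_{10}+\tilde b_1(-P(z_2)))z_2\bigr)=P(b_{10}z_2)+|z_2|q'(b_{10}|z_2|)\,\mathrm{Re}(\tilde b_1(-P(z_2)))+o(P(z_2)).
\end{equation*}
By Remark \ref{remark1} the function $rq'(r)$ vanishes to infinite order at $r=0$, so the middle correction is itself $o(P(z_2))$. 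Dividing the $t=0$ identity by $P(z_2)$ and letting $z_2\to 0$ produces the key asymptotic relation
\begin{equation*}
\alpha=\lim_{z_2\to 0}\frac{P(b_{10}z_2)}{P(z_2)}>0,
\end{equation*}
the positivity coming from $a_{01}\ne 0$ and the fact that the ratio is positive.

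To conclude I invoke Lemma \ref{lemma2}. If $b_{10}\le 1$ the hypotheses hold directly and force $b_{10}=\alpha=1$. If $b_{10}>1$, I run the same argument on $f^{-1}$: writing $f(z_1,z_2)=(g(z_1),z_2 h(z_1))$, the inverse is $(g^{-1}(w_1),w_2/h(g^{-1}(w_1)))$, which has exactly the same block form as $f$ with coefficients $a_{01}^{-1},b_{10}^{-1}$ and with corresponding functions still satisfying the stated vanishing orders. The preceding derivation applied to $f^{-1}$ yields $a_{01}^{-1}=\lim P(b_{10}^{-1}z_2)/P(z_2)$ with $b_{10}^{-1}<1$, so Lemma \ref{lemma2} again forces $b_{10}^{-1}=1$, contradicting $b_{10}>1$. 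Hence $a_{01}=b_{10}=1$. The delicate step throughout is the $t=0$ asymptotic analysis, where the correction $|z_2|q'(b_{10}|z_2|)\,\mathrm{Re}(\tilde b_1(-P(z_2)))$ must be shown to be $o(P(z_2))$; this is exactly what the infinite-order vanishing of $rq'(r)$ (Remark \ref{remark1}) delivers, and without it the limit identity defining $\alpha$ would not follow.
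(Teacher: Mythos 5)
Your proposal is correct and follows essentially the same route as the paper: plug the block form of $f$ into the boundary identity $\mathrm{Re}\,f_1+P(f_2)=0$ along $(it-P(z_2),z_2)$, set $z_2=0$ to get $\mathrm{Im}(a_{01})=0$, set $t=0$ and use Lemma \ref{lemma3} together with the infinite-order vanishing of $P(r)p'(r)=q'(r)$ (Remark \ref{remark1}) to obtain $\lim_{z_2\to 0}P(b_{10}z_2)/P(z_2)=a_{01}>0$, and conclude via Lemma \ref{lemma2}. Your explicit treatment of the case $b_{10}>1$ (which the paper leaves implicit when invoking Lemma \ref{lemma2}, whose hypothesis requires the dilation factor to be at most $1$) is a welcome addition, though it can be done more cheaply by substituting $w=b_{10}z_2$ to get $\lim_{w\to 0}P(b_{10}^{-1}w)/P(w)=1/a_{01}$ rather than passing to $f^{-1}$.
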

\begin{proof}
Since $f(b \Omega_P\cap U)\subset b\Omega_P$, we have
\begin{equation}\label{qtt1}
\begin{split}
\mathrm{Re}\Big(a_{01}\big(it-P(z_2)\big)+\tilde a_0\big(it-P(z_2)\big)\Big)+P\Big(b_{10}z_2+z_2 \tilde b_{1}\big(it-P(z_2)\big)\Big)\equiv 0
\end{split}
\end{equation}
on $\Delta_{\epsilon_0}\times (-\delta_0,\delta_0)$ for some $\epsilon_0,\delta_0>0$. It follows from (\ref{qtt1}) with $z_2=0$ that 
$$
\mathrm{Re}(a_{01} it)+o(t)= 0
$$
for every $t\in\mathbb R$ small enough. This yields that $\mathrm{Im}(a_{01})=0$.

On the other hand, letting $t=0$ in (\ref{qtt1}) one has 
\begin{equation}\label{qtt2}
\begin{split}
P\Big(b_{10}z_2+z_2O(P(z_2)) \Big)-\mathrm{Re}(a_{01})P(z_2)+o(P(z_2))\equiv 0
\end{split}
\end{equation}
on $\Delta_{\epsilon_0}$. This implies that $\lim_{z_2\to 0} P\big(b_{10} z_2+z_2O(P(z_2))\big)/P(z_2)=\mathrm{Re}(a_{01})=a_{01}> 0$. 

By assumption, we can write $P(z_2)=e^{p(|z_2|)}$ for all $z_2\in \Delta^*_{\epsilon_0}$ for some function $p\in\mathcal{C}^\infty(0,\epsilon_0) $ with $\lim_{t\to 0^+}p(t)=-\infty$ such that $P$ vanishes to infinite order at $z_2=0$. Therefore, by Lemma \ref{lemma3} and the fact that $P(z_2)p'(|z_2|)$ vanishes to infinite order at $z_2=0$ (cf. Remark \ref{remark1}), one gets that
 $$
  \lim_{z_2\to 0}\frac{P(b_{10} z_2)}{P(z_2)}=\lim_{z_2\to 0} \frac{P\big(b_{10} z_2+z_2O(P(z_2))\big)}{P(z_2)}=a_{01}> 0.
 $$
Hence, Lemma \ref{lemma2} ensures that $a_{01}=b_{10}=1$, which ends the proof.
\end{proof}

\begin{lemma}\label{lemma6}
If $f\in \mathrm{Aut}\big(\Omega_P\cap U\big)\cap \mathcal{C}^\infty(\overline{\Omega_P}\cap U)$ satisfying $f_1(z_1,z_2) =z_1+\tilde a_0(z_1)$ and $f_2(z_1,z_2)=z_2+z_2 \tilde b_1(z_1)$, where $\tilde a_0\in \mathrm{Hol}(U_1) $ and $\tilde b_1\in \mathrm{Hol}(\mathcal{H}\cap U_1)\cap \mathcal{C}^\infty(\overline{\mathcal{H}}\cap U_1)$ with $\nu_0(\tilde a_0)\geq 2$ and $\nu_0(\tilde b_1)\geq 1$, where $U$ and $U_1$ are neighborhoods of the origins in $\mathbb C^2$ and $\mathbb C_{z_1}$, respectively, then $f=id$.
\end{lemma}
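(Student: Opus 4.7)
The plan is to exploit the tangency identity $(\mathrm{Re}\,f_1)(z_1,z_2)+P(f_2(z_1,z_2))=0$ on $b\Omega_P$. Parametrizing the boundary near the origin by $(z_1,z_2)=(it-P(z_2),z_2)$ and substituting the explicit form of $f$, the identity becomes, with $h(z_1):=1+\tilde b_1(z_1)$,
\begin{equation*}
-P(z_2)+\mathrm{Re}\bigl[\tilde a_0(it-P(z_2))\bigr]+q\bigl(|z_2|\cdot|h(it-P(z_2))|\bigr)=0,
\end{equation*}
valid for all $t\in(-\delta_0,\delta_0)$ and $z_2\in\Delta_{\epsilon_0}$. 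Setting $z_2=0$ yields $\mathrm{Re}[\tilde a_0(it)]=0$ for every small real $t$.

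Next I would show that also $\mathrm{Re}[\tilde a_0'(it)]\equiv 0$. Rearranging the tangency identity to
\begin{equation*}
\frac{q(|z_2|\,|h(it-P(z_2))|)}{q(|z_2|)}=1-\frac{\mathrm{Re}[\tilde a_0(it-P(z_2))]}{q(|z_2|)}
\end{equation*}
and Taylor-expanding $\tilde a_0$ about $it$ (which crucially uses $\tilde a_0\in\mathrm{Hol}(U_1)$), the right side tends to $1+\mathrm{Re}[\tilde a_0'(it)]$ as $z_2\to 0$. Writing $\alpha_t:=|h(it)|$, a subsequence argument from Lemma \ref{lemma2} together with strict monotonicity and infinite-order vanishing of $q$ furnishes the dichotomy
\begin{equation*}
\lim_{r\to 0^+}\frac{q(\alpha r)}{q(r)}=+\infty \text{ when }\alpha>1,\qquad\lim_{r\to 0^+}\frac{q(\alpha r)}{q(r)}=0 \text{ when }\alpha\in(0,1).
\end{equation*}
This rules out $\alpha_t>1$ outright (the left side of the ratio identity would blow up while the right side is bounded); if instead $\alpha_t<1$ held on an open $t$-subinterval, the left side would tend to $0$, forcing $\mathrm{Re}[\tilde a_0'(it)]=-1$ there, a real-analytic identity in $t$ that propagates to $t=0$ and contradicts $\tilde a_0'(0)=0$ (a consequence of $\nu_0(\tilde a_0)\geq 2$). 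Hence $|h(it)|=1$ for every small $t$. A first-order Taylor expansion of $q=e^{p(|\cdot|)}$ combined with Remark \ref{remark1}'s statement that $P(r)p'(r)$ vanishes to infinite order then gives $q(|z_2|\,|h(it-P(z_2))|)/q(|z_2|)\to 1$, hence $\mathrm{Re}[\tilde a_0'(it)]=0$ for all small $t$.

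With $\mathrm{Re}[\tilde a_0(it)]\equiv 0$ and $\mathrm{Re}[\tilde a_0'(it)]\equiv 0$, writing the Taylor series $\tilde a_0(z)=\sum_{k\geq 2}a_kz^k$ and sorting the constraints $\mathrm{Re}(a_k i^k)=0$ and $\mathrm{Re}(a_k i^{k-1})=0$ by the residue of $k$ modulo $4$ forces $a_k=0$ for every $k\geq 2$, so $\tilde a_0\equiv 0$. Then the tangency identity collapses to $q(|z_2|\,|h(it-P(z_2))|)=q(|z_2|)$, and strict monotonicity of $q$ yields $|h(w)|=1$ for every $w=it-P(z_2)$ in an open two-dimensional subset of $\overline{\mathcal{H}}\cap U_1$. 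Because $h$ is holomorphic on $\mathcal{H}\cap U_1$ with constant modulus on an open subset, $h$ is constant; the boundary value $h(0)=1$ then forces $h\equiv 1$, whence $\tilde b_1\equiv 0$ and $f=\mathrm{id}$.

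The main obstacle in the plan is the $z_2\to 0$ analysis of $q(|z_2|\,\tilde\alpha(z_2))/q(|z_2|)$, since the perturbation $\tilde\alpha(z_2)-\alpha_t$ is only of order $O(P(z_2))$ while the ratio is exponentially sensitive to $\alpha$ near $1$. The infinite-order vanishing of $P(r)p'(r)$ from Remark \ref{remark1} is precisely what controls the exponential factor in the expansion and allows the separate extraction of $\alpha_t\equiv 1$ and $\mathrm{Re}[\tilde a_0'(it)]\equiv 0$.
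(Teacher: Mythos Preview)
Your argument is correct and takes a genuinely different route from the paper. The paper substitutes $t=\alpha P(z_2)$, expands via Lemma~\ref{lemma3}, and balances the two resulting leading powers of $P(z_2)$: writing $k_1,k_2$ for the lowest nonzero Taylor indices of $\tilde a_0,\tilde b_1$, the two facts in Remark~\ref{remark1} (that $\limsup_{r\to0^+}|rp'(r)|=\infty$ and that $P(r)p'(r)$ vanishes to infinite order) force the incompatible inequalities $k_2+1>k_1$ and $k_1>k_2$, whence $k_1=k_2=\infty$ and $\tilde a_0\equiv 0$. You instead fix $t$ and let $z_2\to 0$, use the asymptotics of $q(\alpha r)/q(r)$ to pin down $|h(it)|\equiv 1$ and then $\mathrm{Re}[\tilde a_0'(it)]\equiv 0$, and finally kill every Taylor coefficient of $\tilde a_0$ via the pair $\mathrm{Re}(a_ki^k)=\mathrm{Re}(a_ki^{k-1})=0$; both proofs then dispose of $\tilde b_1$ by the same constant-modulus argument on an open subset of $\mathcal H$. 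The paper's order-balancing is more compact and treats $\tilde a_0$ and $\tilde b_1$ in one stroke, while your approach separates the roles of the two unknowns and is closer to first principles, at the cost of the delicate ratio analysis you rightly flag. One small refinement: the dichotomy is only guaranteed in the form $\liminf_{r\to0^+}q(\alpha r)/q(r)=0$ for $0<\alpha<1$ (equivalently $\limsup=\infty$ for $\alpha>1$), obtained by iterating $p(\alpha r)-p(r)\geq -C$ and contradicting infinite-order flatness; full limits need not exist, but since the right-hand side of your ratio identity \emph{does} converge, the $\liminf/\limsup$ versions are exactly what the argument requires, and everything goes through.
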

\begin{proof}
Expand $\tilde a_0$ into the Taylor at $0$ we have
$$
\tilde a_0(z_1)=\sum_{k=2}^\infty a_{0k}z_1^k,
$$
where $a_{0k}\in \mathbb C$ for every $k\geq 2$.

Since $f$ preserves $b \Omega_P\cap U$, it follows that
\begin{equation}\label{qqt2}
\begin{split}
\mathrm{Re}\Big(\big(it-P(z_2)\big)+\sum_{k=2}^\infty a_{0k}\big(it-P(z_2)\big)^k \Big)+P\Big(z_2+\tilde b_1\big(it-P(z_2)\big)\Big)\equiv 0,
\end{split}
\end{equation}
or equivalently,
\begin{equation}\label{qqt3}
\begin{split}
P\Big(z_2+z_2\tilde b_1\big(it-P(z_2)\big)\Big)-P(z_2)+ \mathrm{Re}\Big(\sum_{k=2}^\infty a_{0k}\big(it-P(z_2)\big)^k \Big)\equiv 0
\end{split}
\end{equation}
on $\Delta_{\epsilon_0}\times (-\delta_0,\delta_0)$ for some $\epsilon_0,\delta_0>0$.

If $f_1(z_1,z_2)\equiv z_1$, then let $k_1=+\infty$. In the contrary case, let $k_1\geq 2$ be the smallest integer $k$ such that $a_{0k}\ne 0$. Similarly, if $\tilde b_1(z_1)$ vanishes to infinite order at $z_1=0$, then denote by $k_2=+\infty$. Otherwise, let $k_2\geq 1$ be the smallest integer $k$ such that $b_{1k}:=\frac{\partial^k}{\partial z_1^k}\tilde b_1(0)\ne 0$.

Notice that we may choose $t=\alpha P(z_2)$ in (\ref{qqt3})
(with $\alpha \in \mathbb R$ to be chosen later). Then one gets
\begin{equation}\label{qqt4}
\begin{split}
&P\Big(z_2+z_2 b_{1k_2}P^{k_2}(z_2)(\alpha i-1)^{k_2}+z_2o(P^{k_2}(z_2))\Big)-P(z_2)\\
&\quad +\mathrm{Re} \Big(a_{0k_1}P^{k_1}(z_2)(\alpha i-1)^{k_1}+o(P^{k_1}(z_2))\Big)\equiv 0
\end{split}
\end{equation}
on $\Delta_{\epsilon_0}\times (-\delta_0,\delta_0)$. Moreover, by Lemma \ref{lemma3} one obtains that
\begin{equation}\label{qqt4,5}
\begin{split}
& P^{k_2+1}(z_2)|z_2|p'(|z_2|)\Big(\mathrm{Re}\big(b_{1k_2}(\alpha i-1)^{k_2}+g_2(z_2)\big)\Big)\\
&\quad +P^{k_1}(z_2)\mathrm{Re} \Big(a_{0k_1}(\alpha i-1)^{k_1}+g_1(z_2)\Big)\equiv 0
\end{split}
\end{equation}
on $\Delta_{\epsilon_0}$, where $g_1,g_2\in \mathcal{C}^\infty(\Delta_{\epsilon_0})$ with $g_1(0)=g_2(0)=0$.

We remark that $\alpha$ can be chosen so that $\mathrm{Re}\big(b_{1k_2}(\alpha i-1)^{k_2}\ne 0$ and $\mathrm{Re} \big(a_{0k_1}(\alpha i-1)^{k_1}\big)\ne 0$. Furthermore, since $\limsup_{r\to 0^+} |r p'(r)|=+\infty$ (cf. Remark \ref{remark1}), (\ref{qqt4,5}) yields that $k_2+1>k_1$. However, by the fact that $P(z_2)p'(|z_2|)$ vanishes to infinite order at $z_2=0$ (see Remark \ref{remark1}) and by (\ref{qqt4,5}) one has $k_1>k_2$. Hence, we conclude that $k_1=k_2=+\infty$.

Since $k_1=k_2=+\infty$, it follows that $f_1(z_1,z_2)\equiv z_1$ and (\ref{qqt3}) is equivalent to
\begin{equation}\label{qqt5,5}
\begin{split}
P\Big(z_2+\tilde b_1\big(it-P(z_2)\big)\Big)\equiv P(z_2),
\end{split}
\end{equation}
on $\Delta_{\epsilon_0}\times (-\delta_0,\delta_0)$. Since the level sets of $P$ are circles, (\ref{qqt5,5}) implies that $\tilde b_1(z_1)\equiv 0$. Thus, the proof is complete.
\end{proof}

\begin{proof}[Proof of Theorem \ref{Th2}]
Let $f=(f_1,f_2)\in \mathrm{Aut}(\Omega_P)$. By Lemma \ref{lem6}, there exist $t_1,t_2\in \mathbb R$ such that $f$ and $f^{-1}$ extend smoothly to the boundaries near $(it_1,0)$ and $(it_2,0)$, respectively, and $f(it_1,0)=(it_2,0)$. Replacing $f$ be $T_{-t_2}\circ f \circ T_{t_1}$ we may assume that $f(0,0)=(0,0)$ and there are neighborhoods $U$ and $V$ of $(0,0)$ such that $f$ is a local CR diffeomorphism between $V\cap b \Omega_P$ and $V\cap b \Omega_P$.

For each $t\in \mathbb R$, let us define $F_t$ by setting $F_t:=f\circ R_{-t}\circ f^{-1} $. Then $\{F_t\}_{t\in \mathbb R}$ is a one-parameter subgroup of $\mathrm{Aut}(\Omega_P)\cap \mathcal{C}^\infty(\overline{\Omega_P}\cap U)$. 

By Theorem \ref{Th1}, there exists a real number $\delta$ such that $F_t=R_{\delta t}$ for all $t\in \mathbb R$. This implies that
\begin{equation}\label{tte1}
f= R_{\delta t}\circ f\circ R_{t},~\forall t\in \mathbb R .
\end{equation}
We note that if $\delta=0$, then $f=f\circ R_t$ and thus $R_t=id$ for any $t\in \mathbb R$, which is a contradiction. Hence, we can assume that $\delta\ne 0$.

We shall prove that $\delta=-1$. Indeed, by (\ref{tte1}) we have
\begin{equation}\label{tte2}
\begin{split}
 f_2(z_1,z_2)\equiv e^{i\delta t} f_2\big(z_1, z_2 e^{it}\big)
 \end{split}
\end{equation}
on a neighborhood $U$ of $(0,0)\in \mathbb C^2$ and for all $t\in \mathbb R$.

Expand $f_2$ into Taylor series, one obtains that
$$
f_2(z_1,z_2)=\sum_{n=0}^\infty b_{n} (z_1)z_2^n,
$$
where $b_{n},~n=0,2,\ldots$, are in $Hol(\mathcal{H})\cap\mathcal{C}^\infty(\overline{\mathcal{H}})$ and $b_{0}(0)=f_2(0,0)=0$. Hence, Eq. (\ref{tte2}) is equivalent to
\begin{equation}\label{tte3}
\begin{split}
\sum_{n=0}^\infty b_{n}( z_1) z_2^n\equiv \sum_{n=0}^\infty b_{n}(z_1) z_2^n e^{i\delta t+int}
\end{split}
\end{equation}
on $U$ for all $t\in \mathbb R$. This implies immediately that $b_0(z_1)\equiv 0$. Since $f$ is biholomorphism, $b_1(z_1)\not \equiv 0$. Therefore, (\ref{tte3}) yields that $\delta=-1$ and $b_{n}=0$ for every $n\in \mathbb N\setminus \{1\} $. It means that $f_2(z_1,z_2)\equiv z_2b_1(z_1)$.

We conclude that $F_t=R_{-t}$ for all $t\in \mathbb R$. This implies that
\begin{equation}\label{e1}
f= R_{-t}\circ f\circ R_{t},~\forall t\in \mathbb R,
\end{equation}
which implies that
\begin{equation*}
f_1(z_1,z_2)\equiv f_1(z_1,z_2 e^{it})
\end{equation*}
on a neighborhood $U$ of $(0,0)$ in $\mathbb C^2$ for all $t\in \mathbb R$. This yields that $f_1(z_1,z_2)=a_0(z_1)$. 

Since $f$ preserves the boundary $b\Omega_P\cap U$, we have

\begin{equation}\label{eq2014-1}
\mathrm{Re}\Big(a_0(is-P(z_2))\Big)+P\Big(z_2b_1(is-P(z_2))\Big)=0
\end{equation}
for all $(z_2,s)\in \Delta_{\epsilon_0}\times (-\delta_0,+\delta_0)$. Letting $z_2=0$ in (\ref{eq2014-1}) one gets
\begin{equation}\label{eq2014-2}
\mathrm{Re}(a_0(is))=0
\end{equation}
for all $s\in (-\delta_0,+\delta_0)$. Hence, by the Schwarz reflection principle $a_0$ extends to be holomorphic in a neighborhood of the origin $z_1=0$; we shall denote the extension by $a_0$ too and the Taylor expansion of $a_0$ at $z_1=0$ is given by
$$ 
a_0(z_1)=\sum_{m=1}^\infty a_{0m} z_1^m. 
$$
Moreover, because $f\in \mathrm{Aut}(\Omega_P)$, it follows that $a_{01}\ne 0$. From (\ref{eq2014-2}), we have
$$
\mathrm{Im}(a_{01})=0.
$$

Next, we are going to show that $b_1(0)\ne 0$. Indeed, suppose otherwise that $\nu_0(b_1)\geq 1$. Then it follows from (\ref{eq2014-1}) with $s=0$ that
$$
\lim_{z_2\to 0}\frac{P(z_2b_1(P(z_2)))}{P(z_2)}=\mathrm{Re}(a_{01})=a_{01}>0,
$$
which is impossible since 
$$
\lim_{z_2\to 0}\frac{P(z_2b_1(P(z_2)))}{P(z_2)}=\lim_{z_2\to 0}\frac{P(z_2b_1(P(z_2)))}{z_2b_1(P(z_2)}\lim_{z_2\to 0}\frac{z_2b_1(P(z_2)}{P(z_2)}=0.
$$
Hence, we conclude that
$$
f_2(z_1,z_2)=b_{10} z_2 + z_2 \tilde b_1(z_1),
$$
where $b_{10}\in \mathbb C^*$ and $\tilde b_1\in Hol(\mathcal{H})\cap\mathcal{C}^\infty(\overline{\mathcal{H}})$ with $\tilde b_1(0)=0$. In addition, replacing $f$ by $f\circ R_\theta$ for some $\theta\in \mathbb R$, we can assume that $b_{10}$ is a positive real number. 

We now apply Lemma \ref{lemma7} to obtain that $a_{01}=b_{10}=1$. Furthermore, by  Lemma \ref{lemma6} we conclude that $f=id$. Hence, the proof is complete.
\end{proof}
\section{Appendix}
We recall the following lemma, which is a version of the Hopf lemma.
\begin{lemma}[See Lemma 2.3 in \cite{Suk}] \label{hopf}
Let $\Omega\subset \mathbb C^n$ be a bounded domain with $\mathcal{C}^2$ boundary. Let $K\Subset \Omega$ be a compact set nonempty interior, and choose $L>0$. Then there exists $C=C(K,L)>0$ such that for any negative plurisubharmonic function $u$ in $\Omega$ satisfying the condition $u(z)<-L$ on $K$, the following bound holds:
$$
|u(z)|\geq C \delta_\Omega(z)~\text{for}~z\in \Omega. 
$$   
\end{lemma}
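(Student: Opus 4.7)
The strategy combines three classical ingredients: (i) the uniform interior ball condition coming from $\mathcal{C}^2$ boundary regularity; (ii) propagation of the negativity bound from $K$ to a larger ``bulk'' region via the submean inequality for subharmonic functions; and (iii) the one-dimensional Hopf lemma applied along the radius of each interior ball.

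First, since $b\Omega$ is $\mathcal{C}^2$, there exists $r_0>0$ (depending only on $\Omega$) such that for every $p\in b\Omega$ there is an interior ball $B(q_p,r_0)\subset\Omega$ with $p\in\partial B(q_p,r_0)$. Set
\[
K_0:=\{z\in\Omega\colon \delta_\Omega(z)\ge r_0/2\},
\]
a compact subset of $\Omega$ containing every center $q_p$; any $z$ with $\delta_\Omega(z)<r_0/2$ lies on the inward normal segment from its nearest boundary point $p$ to $q_p$, satisfying $\delta_\Omega(z)=r_0-|z-q_p|$.

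Next, I would propagate the bound $u\le -L$ from $K$ to all of $K_0$. Since $K$ has nonempty interior, fix a ball $B(p_0,r_1)\subset K$ on which $u\le -L$. For any ball $B(q,R)\subset\Omega$ with $B(p_0,r_1)\subset B(q,R)$, the submean inequality for the (pluri)subharmonic function $u$ gives
\[
u(q)\le\frac{1}{|B(q,R)|}\int_{B(q,R)} u\le -L\,(r_1/R)^{2n},
\]
using $u\le 0$. Connecting $p_0$ to any point of $K_0$ by a finite chain of balls in $\Omega$ whose consecutive members have uniformly overlapping fractions of volume, one iterates this averaging step---at each stage carrying a uniform bound on an entire sub-ball rather than merely at a point, since plurisubharmonic functions may take the value $-\infty$ at isolated points---to obtain a positive constant $L'=L'(K,L,\Omega)$ with $u\le -L'$ on $K_0$. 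This propagation step is the main technical obstacle, as $L'$ must depend only on $K$, $L$, $\Omega$ and not on the particular function $u$; the volume-averaging chain argument achieves exactly this.

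Finally, for $z$ with $\delta_\Omega(z)<r_0/2$ and associated nearest boundary point $p$ and interior ball center $q_p$, consider the analytic disc $\phi_z\colon\Delta\to B(q_p,r_0)$ given by $\phi_z(\zeta)=q_p+r_0\zeta\,\nu_p$, where $\nu_p=(p-q_p)/r_0$. Then $v:=u\circ\phi_z$ is subharmonic on $\Delta$, $v\le 0$, and $v\le -L'$ on $\{|\zeta|\le 1/2\}$ because $\phi_z(\{|\zeta|\le 1/2\})\subset K_0$. On the annulus $\{1/2<|\zeta|<1\}$, compare $v$ with the harmonic function $h(\zeta)=(L'/\log 2)\log|\zeta|$ (which equals $0$ on $|\zeta|=1$ and $-L'$ on $|\zeta|=1/2$); by the maximum principle $v\le h$, and $h(\zeta)\le -(L'/\log 2)(1-|\zeta|)$ throughout the annulus. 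Since $z=\phi_z(\zeta_0)$ with $1-\zeta_0=\delta_\Omega(z)/r_0$, we conclude $|u(z)|\ge (L'/(r_0\log 2))\,\delta_\Omega(z)$ near $b\Omega$; the bound extends to all of $\Omega$ because $\delta_\Omega$ is bounded above by $\operatorname{diam}(\Omega)$ and $|u|\ge L'$ on $K_0$.
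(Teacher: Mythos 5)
Your argument is correct, and it is worth noting that the paper itself offers no proof of this lemma: it is quoted verbatim as Lemma 2.3 of \cite{Suk}, so there is no in-paper argument to compare against. What you supply is a complete, self-contained proof along the classical lines, and it is essentially the standard one (and, in substance, the one in Sukhov's paper): the $\mathcal{C}^2$ boundary gives a uniform interior ball of radius $r_0$ at every boundary point; the bound $u\le -L$ on a fixed ball inside $K$ is propagated to the compact ``bulk'' set $K_0=\{\delta_\Omega\ge r_0/2\}$ by the sub-mean value inequality along a chain of balls (here you correctly insist that the resulting constant $L'$ depend only on $K$, $L$, $\Omega$, which the compactness of $K_0$ and connectedness of $\Omega$ guarantee, though you state this step only in outline); and the linear decay near $b\Omega$ follows from the one-variable comparison $v\le \frac{L'}{\log 2}\log|\zeta|$ on the annulus $\tfrac12<|\zeta|<1$ inside the analytic disc along the inner normal, using $\log t\le t-1$ and the trivial bound on $K_0$ to cover interior points. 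Two small points you implicitly use and could make explicit: the maximum principle on the annulus needs only $\limsup_{|\zeta|\to 1}(v-h)\le 0$, which holds because $v\le 0$ and $h\to 0$; and if $u\circ\phi_z\equiv-\infty$ on the disc the desired inequality at $z$ is vacuously true. With these remarks the proof is complete and yields the stated constant $C=C(K,L)$.
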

The following lemma is a slight generalization of \cite[Lemma 2.4]{Suk}.
\begin{lemma}\label{lemma12} Let $D$ and $G$ be domains in $\mathbb C^n$ with $\mathcal{C}^2$-smooth boundaries, $z_0\in bD$, $w_0\in bG$, $F_1, F_2:[0,+\infty)\to [0,+\infty)$ are nonnegative functions with $F_1(0)=F_2(0)=0$ such that $F_1$ is increasing. Assume that there is a neighborhood $U$ of $z_0$ such that for each $w\in U\cap bD$, there is a plurisubharmonic function $\psi_w$ such that
\begin{itemize}
\item[(i)] $\lim\limits_{D\times bD\ni(z,w)\to (z_0,z_0)}\psi_w(z)=0$,
\item[(ii)] $\psi_w(z)\leq -F_1(|z-w|)$,
\item[(iii)] $\psi_{\pi(z)}(z)\geq -F_2(\delta_D(z))$
\end{itemize} 
for $z\in U\cap D$. Let $f:D\to G$ be a proper map such that $w_0\in \mathcal{C}(f,z_0)$. Then there exist neighborhoods $\tilde U\subset U$ and $V$ of $z_0$ and $w_0$, respectively, such that  $\delta_G(f(z))\lesssim F_2(\delta_D(z))$ for any $z\in \tilde U\cap D$ such that $f(z)\in V\cap G$.
\end{lemma}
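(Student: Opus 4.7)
My plan is to pushforward the family $\{\psi_w\}_{w\in U\cap bD}$ along the proper map $f$ to obtain plurisubharmonic functions on $G$, and then combine hypothesis (iii) with the Hopf lemma (Lemma \ref{hopf}) to convert the lower bound on $\psi_{\pi(z)}(z)$ into the desired estimate on $\delta_G(f(z))$. First, for each $w\in U\cap bD$, I would set
\[
\Psi_w(\zeta) := \max\bigl\{\psi_w(z') : z' \in f^{-1}(\zeta)\bigr\},\qquad \zeta \in G.
\]
Since $f$ is a proper holomorphic surjection between equidimensional domains, $f^{-1}(\zeta)$ is a nonempty finite set for each $\zeta$, and the pushforward of a plurisubharmonic function by such a map is again plurisubharmonic; hence $\Psi_w$ is plurisubharmonic on $G$ with $\Psi_w\le 0$. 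Plugging in $w=\pi(z)$ for $z\in U\cap D$ and using $z\in f^{-1}(f(z))$, hypothesis (iii) gives the key lower bound
\[
\Psi_{\pi(z)}(f(z)) \ge \psi_{\pi(z)}(z) \ge -F_2(\delta_D(z)).
\]

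For the matching upper bound I would localize near $w_0$. Using that $bG$ is $\mathcal{C}^2$ near $w_0$, I construct a bounded $\mathcal{C}^2$-smoothly bounded subdomain $G^*\subset G$ with $w_0\in bG^*$ and $G^*\cap W = G\cap W$ for some neighborhood $W$ of $w_0$ (e.g.\ intersect $G$ with a large ball and smooth the resulting corner away from $w_0$). Fix a compact $K\Subset G^*$ with nonempty interior; by properness of $f$, $K':=f^{-1}(K)\Subset D$, so $d_0:=\mathrm{dist}(K',z_0)>0$. For any $w\in bD$ with $|w-z_0|<d_0/2$, every $z'\in K'$ satisfies $|z'-w|\ge d_0/2$, whence (ii) together with monotonicity of $F_1$ gives
\[
\Psi_w(\zeta) \le -F_1(d_0/2) =: -L < 0 \qquad \text{for all } \zeta\in K,
\]
uniformly in such $w$. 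The Hopf lemma (Lemma \ref{hopf}) applied to $\Psi_w$ on $G^*$ then yields a constant $C=C(K,L)>0$ with $|\Psi_w(\zeta)|\ge C\,\delta_{G^*}(\zeta)$ on $G^*$, again uniform in $w$.

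Finally, I would choose $\tilde U\subset U$ so small that $\pi(z)\in U\cap bD$ and $|\pi(z)-z_0|<d_0/2$ whenever $z\in\tilde U\cap D$, and shrink $V\subset W$ so that $\delta_{G^*}(\zeta)=\delta_G(\zeta)$ for $\zeta\in V\cap G$. For $z\in\tilde U\cap D$ with $f(z)\in V\cap G$, chaining the two bounds gives
\[
F_2(\delta_D(z)) \ge -\Psi_{\pi(z)}(f(z)) \ge C\,\delta_G(f(z)),
\]
which is exactly $\delta_G(f(z))\lesssim F_2(\delta_D(z))$. The main obstacle I anticipate is the pushforward step: it depends on $\psi_w$ being plurisubharmonic globally on $D$ (and not only on $U\cap D$, as a strict reading of the hypotheses might suggest) and on the classical but nontrivial fact that the pushforward of a plurisubharmonic function by a proper holomorphic map of equal dimension is plurisubharmonic. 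Secondary technical points are the construction of the auxiliary domain $G^*$ matching $G$ near $w_0$ and checking that the constants produced by the Hopf lemma are uniform as $w$ varies in a neighborhood of $z_0$ in $bD$.
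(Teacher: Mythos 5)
Your overall strategy (push the plurisubharmonic family forward along the proper map and then apply the Hopf lemma) is the same as the paper's, which follows Sukhov's technique, but there is a genuine gap at exactly the point you flag and then set aside. The hypotheses only control $\psi_w$ on $U\cap D$: nothing guarantees that $\psi_w$ is defined, plurisubharmonic, or negative on all of $D$, so your pushforward $\Psi_w(\zeta)=\max\{\psi_w(z')\colon z'\in f^{-1}(\zeta)\}$ need not even make sense on $G$, since fibers $f^{-1}(\zeta)$ generally contain points far outside $U$. Worse, even granting global plurisubharmonicity, your uniform bound $\Psi_w\le -F_1(d_0/2)$ on the compact set $K$ invokes hypothesis (ii) at the points of $K'=f^{-1}(K)$, and (ii) is only assumed for $z\in U\cap D$; for preimage points outside $U$ you only know $\psi_w\le 0$, which destroys the uniform negative upper bound that the Hopf lemma needs. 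So both the definition of the pushforward and the constant $L$ are unjustified under the stated hypotheses.

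The missing idea, which is the heart of the paper's proof, is a truncation-and-gluing step before pushing forward: one fixes a small ball around $z_0$, composes $\psi_\zeta$ with a smooth nondecreasing convex cutoff $\phi$ (with $\phi(t)=-\tau$ for $t\le -\tau$ and $\phi(t)=t$ for $t>-\tau/2$, where $\tau$ is chosen via (i) and (ii)), and extends the resulting function $\rho_\zeta=\tau^{-1}\phi\circ\psi_\zeta$ by the constant $-1$ outside that ball. This produces a negative continuous plurisubharmonic function on all of $D$ that coincides with $\tau^{-1}\psi_\zeta$ near $z_0$, so the pushforward $\varphi_p$ is well defined and plurisubharmonic on $G$, and on any compact $K\Subset G$ the value is automatically bounded above by a negative constant independent of $p$: fiber points away from $z_0$ contribute only $-1$, while fiber points near $z_0$ lie in $U\cap D$ at positive distance from the boundary (by properness), where (ii) legitimately applies. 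With that modification your lower bound via (iii), the localization $G^*$ for the Hopf lemma, and the final chaining go through essentially as you wrote them; without it, the argument does not follow from the hypotheses as stated.
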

\begin{proof}
The proof proceeds along the same lines as in \cite[Section $2$]{Suk}, but for the reader's convenience, we shall give the detailed proof. 

For $\epsilon>0$ we consider the open set
$$
D^\epsilon=\{z\in U\cap D\colon \psi_{z_0}>-\epsilon\}.
$$ 
By virtue of (ii) there exists $\epsilon_0>0$ such that for any $\epsilon\in (0,\epsilon_0]$ one has 
$\bar D^\epsilon\subset \bar U$. Hence, the boundary $bD^\epsilon\subset  (\bar U\cap bD) \cup S^\epsilon$, where $S^\epsilon=\{z\in \bar U\cap D\colon \psi_{z_0}(z)=-\epsilon\}$.

We fix $\epsilon\in (0,\epsilon_0/2]$ and choose $\epsilon_1>0$ so that $D\cap (z_0+\epsilon_1\mathbb B)\subset D^\epsilon$, where $\mathbb B$ is the open unit ball in $\mathbb C^n$. Without loss of generality we may assume that the neighborhood $U$ is small enough such that $\delta_D(z)=|z-\pi(z)|$ for $z\in U\cap D$.  We fix a positive number $\delta$ with the properties $\epsilon_1/50<\delta<2\delta<\epsilon_1/10$ and consider the compact set $K=\bar D\cap (z_0+2\delta\bar{\mathbb B})\setminus (z_0+\delta\bar{\mathbb B})$. For $\epsilon_2<\epsilon_1/100$ we have by (ii) that 
\begin{equation*}
\begin{split}
\max\{\psi_\zeta(z)\colon z\in K,\zeta \in bD\cap (z_0+\epsilon_2 \bar{\mathbb B})\}&\leq -F_1\Big(d\big(K,bD\cap (z_0+\epsilon_2 \bar{\mathbb B}) \big)\Big)\\
  &\leq -F_1(\delta-\epsilon_2).                  
\end{split}
\end{equation*}
On the other hand, by (i) one can choose $\epsilon_2$ such that
\begin{equation*}
\begin{split}
-F_1(\delta-\epsilon_2)<\gamma:=\min\{\psi_\zeta(z)\colon z\in D\cap (z_0+\epsilon_2 \bar{\mathbb B}),\zeta \in bD\cap (z_0+\epsilon_2 \bar{\mathbb B})\}.               
\end{split}
\end{equation*}
We fix $\epsilon_2>0$. Let $\tau>0$ be such that 
$$
-F_1(\delta-\epsilon_2)<-\tau<-\tau/2<\gamma<0.
$$

We consider a smooth nondecreasing convex function $\phi(t)$ with the properties $\phi(t)=-\tau$ for $t\leq -\tau$ and $\phi(t)=t$ for $t>-\tau/2$. We set $\rho_\zeta(z)=\tau^{-1}\phi\circ \psi_\zeta(z)$. Then $\rho_\zeta(z)\mid_K=-1$ for $\zeta \in bD\cap (z_0+\epsilon_2 \bar{\mathbb B}) $, and we can extend $\rho_\zeta(z)$ to $D$ by setting $\rho_\zeta(z)=-1$ for $z\in D\setminus (z_0+2\delta \bar{\mathbb B})$. We obtain a function $\rho_\zeta(z)$, which is a negative continuous plurisubharmonic function on $D$ satisfying $\rho_\zeta(z)=-1$ on $D\setminus (z_0+\delta \mathbb B)$ and $\rho_\zeta(z)=\tau^{-1}\psi_\zeta(z)$ on $D\cap (z_0+\epsilon_2\mathbb B)$ for $\zeta \in bD\cap (z_0+\epsilon_2\mathbb B)$.

There is $\epsilon_3\in (0,\epsilon_2/2)$ such that $\pi(z)\in bD\cap (z_0+\epsilon_2\mathbb B)$ for any $z\in D\cap (z_0+\epsilon_3\bar{\mathbb B})$. We also fix a point $p\in D\cap (z_0+\epsilon_3\bar{\mathbb B})$ and define the function 
\begin{equation*}
\varphi_p(w)=
\begin{cases}
\sup\{\rho_{\pi(p)}(z)\colon z\in f^{-1}(w)\}&~\text{for}~ w\in f(D^\epsilon),\\
-1 &~\text{for}~ w\in G\setminus f(D^\epsilon).
\end{cases}
\end{equation*} 

Since $f$ is proper, the function $\varphi_p(w)$ is a continuous negative plurisubharmonic function on $G$ ( see \cite[Lemma 2.2]{Suk}).

Let $V$ be a neighborhood of the point $w_0$ such that the surface $V\cap b G$ is smooth. We fix a compact set $K\Subset f(D^{\epsilon_2})\cap V$ with nonempty interior (this is possible since $w_0\in \mathcal{C}(f,z_0)$ and $f(D^{\epsilon_2})$ is an open set). Assume that $2\max_{w\in K} \varphi_p(w)\leq -L=-L(p)$. The by Lemma \ref{hopf}, we have $|\varphi_p(w)|\geq C(L) \delta_G(w)$ for $w\in G\cap V$, where $C=C(L)>0$ depends ony on $L=L(p)$. We now show that $L$ (hence also $C$) can be chosen independent of $p$.  

We have 
\begin{equation*}
\begin{split}
\max_{w\in K}\varphi_p(w)&=\max\{\rho_{\pi(p)}(z)\colon z\in f^{-1}(w)\cap D^{\epsilon_2}, w\in K\}\\
                      &=\max\{\rho_{\pi(p)}(z)\colon z\in f^{-1}(K)\cap D^{\epsilon_2}\}.
\end{split}
\end{equation*}

Since $f$ is proper, $\mathcal{C}(f,z)\subset b G$ for $z\in U\cap b D$, and thus the set $f^{-1}(K)$ has no limit points on $U\cap b D$. Therefore, the set $K'=\overline{f^{-1}(K)\cap D^{\epsilon_2}}$ is relatively compact in $U\cap D$. If $z\in K'$, then by (iii) we have 
\begin{equation*}
\begin{split}
\max\{\psi_{\pi(p)}(z) \colon z\in K'\}&\leq -F_1\Big(\min\{|z-\zeta|\colon z\in  K',\zeta \in b D \cap (z_0+\epsilon_2\mathbb{B})\}\Big)\\
                      &=-F_1\Big(d(K',b D \cap (z_0+\epsilon_2\mathbb{B}))\Big).
\end{split}
\end{equation*}
Since the last quantity does not exceed some constant $-N<0$, we may set
$$
 2\max_{z\in K'} \rho_{\pi(p)}(z)\leq 2\tau^{-1}N:=L,
$$
and $L$ is independent of $p$.

If now $f(p)\in V\cap G$, we have by (iii) 
\begin{equation*}
\begin{split}
\delta_G(f(p))&\leq C|\varphi_p(f(p))|\leq C |\rho_{\pi(p)}(p)|\\
                      &\leq CF_2(\delta_D(p)).
\end{split}
\end{equation*}
Since $C>0$ here does not depend on $p$, we have arrived at
$$
\delta_G(f(z))\lesssim F_2(\delta_D(z))
$$
for any $z\in D\cap (z_0+\epsilon_3\mathbb B)$ such that $f(z)\in G\cap V$. This ends the proof.
\end{proof}
\section*{Acknowlegement}
 This work was completed when the author was visiting the Vietnam Institute for Advanced Study in Mathematics (VIASM). He would like to thank the VIASM for financial support and hospitality.  It is a pleasure to thank Tran Vu Khanh and Dang Anh Tuan for stimulating discussions.


\begin{thebibliography}{99}


\bibitem{Bel81} S. Bell, ``Biholomorphic mappings and the $\bar \partial$-problem", Ann. of Math. (2)  114  (1981), no. 1, 103--113.
\bibitem{BL80} S. Bell and E. Ligocka, ``A simplification and extension of Fefferman's theorem on biholomorphic mappings", Invent. Math. 57 (1980), no. 3, 283--289.
\bibitem{Ber91}
F. Berteloot,``H\"{o}lder continuity of proper holomorphic mappings", Studia Math. 100 (3) (1991), 229--235.
\bibitem{Ber92} 
F. Berteloot, ``A remark on local continuous extension of proper holomorphic mappings", The Madison Symposium on Complex Analysis (Madison, WI, 1991),  79--83, Contemp. Math., 137, Amer. Math. Soc., Providence, RI, 1992.
\bibitem{Ber94} 
F. Berteloot, ``Characterization of models in $\mathbb C^2$  by their automorphism groups", Internat. J. Math. 5 (1994), no. 5, 619--634.
\bibitem{Ber} F. Berteloot, ``Attraction de disques analytiques et continuit\'{e} Hold\'{e}rienne d'applications holomorphes propres", Topics in Compl. Anal., Banach Center Publ. (1995), 91--98.
\bibitem{By1} J. Byun, J.-C. Joo and M. Song, ``The characterization
of holomorphic vector fields vanishing at an infinite type point",
 J. Math. Anal. Appl.  387 (2012), 667--675.
\bibitem{CV14} D. Chakrabarti and K. Verma, ``Condition R and holomorphic mappings of domains with generic corners",  Illinois J. Math. 57 (2013), no. 4, 1035--1055. 
\bibitem{Ch} E. M. Chirka, ``Variations of the Hartogs theorem", (Russian) Tr. Mat. Inst. Steklova {\bf 253} (2006),  Kompleks. Anal. i Prilozh., 232--240 [ translation in  Proc. Steklov Inst. Math.  2006,  no. 2 (253), 212--220].
\bibitem{Cho92} S. Cho, ``A lower bound on the Kobayashi metric near a point of finite type in $\mathbb C^n$", J. Geom. Anal. 2 (1992), no. 4, 317--325.
\bibitem{D} J. P. D'Angelo, ``Real hypersurfaces, orders of contact, and applications",
 Ann. Math.  115 (1982), 615--637.
\bibitem{DF79} K. Diederich and J. E. Fornaess, ``Proper holomorphic maps onto pseudoconvex domains with real-analytic boundary", Ann. of Math. (2)  110  (1979), no. 3, 575--592.
\bibitem{GK91}  R. Greene and S. G. Krantz, ``Invariants of Bergman geometry and the automorphism groups of domains in $\mathbb C^n$", Geometrical and algebraical aspects in several complex variables (Cetraro, 1989),  107–136, Sem. Conf., 8, EditEl, Rende, 1991.
\bibitem{GK} R. Greene and S. Krantz, ``Techniques for studying
automorphisms of weakly pseudoconvex domains",  Math. Notes,
Vol 38, Princeton Univ. Press, Princeton, NJ, 1993, 389--410.
\bibitem{HN} A. Hayashimoto and V. T. Ninh, ``Infinitesimal CR automorphisms and stability groups of infinite type models in $\mathbb C^2$", arXiv:1409.3293, to appear in Kyoto Jourmal of Mathematics.
\bibitem{Kr01} S. Krantz,  Function theory of several complex variables. Reprint of the 1992 edition. AMS Chelsea Publishing, Providence, RI, 2001.
\bibitem{HL84}
G. M. Henkin and J. Leiterer, {\it Theory of functions on complex manifolds}, Monographs in Mathematics, Vol. 79, Birkh\"{a}user Verlag, Basel, 1984.
\bibitem{Hu95} X. Huang, ``A boundary rigidity problem for holomorphic mappings on some weakly pseudoconvex domains", Canad. J. Math. 47 (1995), no. 2, 405--420.

\bibitem{IK} A. Isaev and S. G. Krantz, ``Domains with non-compact automorphism group:
A survey",  Adv. Math. 146 (1999), 1--38.
\bibitem{IK06}  A. V. Isaev and N. G. Kruzhilin,``Proper holomorphic maps between Reinhardt domains in $\mathbb C^2$", Michigan Math. J.  54  (2006),  no. 1, 33--63.

\bibitem{Ka94} H. Kang, ``Holomorphic automorphisms of certain class of domains of infinite type", Tohoku Math. J. (2) 46 (1994), no. 3, 435--442.
\bibitem{Kr12} S. Krantz, `` The automorphism group of a domain with an exponentially flat boundary point", J. Math. Anal. Appl. 385 (2012), no. 2, 823--827.
\bibitem{Kim-Ninh} K.-T. Kim and V. T. Ninh, ``On the tangential holomorphic vector fields vanishing at an infinite type point", Trans. Amer. Math. Soc. 367 (2) (2015), 867--885.
\bibitem{Ninh1} V. T. Ninh, ``On the CR automorphism group of a certain hypersurface of infinite type in $\mathbb C^2$", Complex Var. Elliptic Equ. DOI 10.1080/17476933.2014.986656.
\bibitem{Sib81} N. Sibony, ``A class of hyperbolic manifolds", Recent developments in several complex variables (Proc. Conf., Princeton Univ., Princeton, N. J., 1979),  357--372, Ann. of Math. Stud., 100, Princeton Univ. Press, Princeton, N.J., 1981.
\bibitem{Suk} A. B. Sukhov, ``On the boundary regularity of holomorphic mappings", (Russian)  Mat. Sb.  185  (1994),  no. 12, 131--142;  translation in  Russian Acad. Sci. Sb. Math.  83  (1995),  no. 2, 541--551.
\bibitem{Khanh13} V. K. Tran, ``Boundary behavior of the Kobayashi metric near a point of infinite type", J. Geom. Anal. DOI 10.1007/s12220-015-9565-y. 


\end{thebibliography}
\end{document}